\documentclass[11pt,letterpaper,reqno, english]{amsart}
\usepackage{url,mathptmx}
\usepackage{amsmath,amstext,amsfonts,amssymb,amsbsy,mathrsfs}
\usepackage{amssymb,amsthm, fullpage, color}
\usepackage{latexsym}
\usepackage{array}
\usepackage{enumerate}
\usepackage{amsthm}
\usepackage{amscd}
\usepackage{multirow}
\usepackage{babel}
\usepackage{enumitem}
\usepackage{lmodern} 
\usepackage{tikz}
\usepackage{color}

%
%

\newcommand{\ident}{\equiv}
\newcommand{\abs}[1]{\left|#1\right|}
\newcommand{\norm}[1]{\left\| #1 \right\|}

\newcommand{\intersect}{\cap}

\newcommand{\bb}{\mathbb}

\newcommand{\lap}{\Delta}

\newcommand{\bdy}{\partial}

%
%
\newtheorem{theorem}{Theorem}[section]

\newtheorem{prop}[theorem]{Proposition}
\newtheorem{lemma}[theorem]{Lemma}
\newtheorem{claim}[theorem]{Claim}
\newtheorem{remark}[theorem]{Remark}

\numberwithin{equation}{section}
\numberwithin{figure}{section}

\title{Classification of Solutions to a Critically Nonlinear System of Elliptic Equations on Euclidean Half-Space}
\subjclass[2010]{Primary 35J57; Secondary 35J66, 35K57}
\keywords{Nonlinear elliptic systems}
\author{Mathew R. Gluck}
\author{Lei Zhang}
\date{}
\begin{document}

\address{Department of Mathematics\\
University of Florida\\
358 Little Hall, PO Box 118105\\
Gainesville FL 32611-8105}

\email{mgluck@ufl.edu}
\email{leizhang@ufl.edu}
\maketitle
\begin{abstract}
For $N\geq 3$ and non-negative real numbers $a_{ij}$ and $b_{ij}$ ($i,j= 1, \cdots, m$), the semi-linear elliptic system

\begin{equation*}
	\begin{cases}
		\Delta u_i + \prod_{j = 1}^m u_j^{a_{ij}} = 0
			&
			\text{ in } \bb R_+^N
			\\
		\frac{\partial u_i}{\partial y_N}
			=
			c_i \prod_{j = 1}^m u_j^{b_{ij}}
			&
			\text{ on } \bdy \bb R_+^N
	\end{cases}
	\qquad
	i = 1, \cdots, m  
\end{equation*}
is considered, where $\bb R_+^N$ is the upper half of $N$-dimensional Euclidean space. Under suitable assumptions on the exponents $a_{ij}$ and $b_{ij}$, a classification theorem for the positive $C^2(\bb R_+^N)\cap C^1(\overline{R_+^N})$-solutions of this system is proven. 
\end{abstract}
\section{Introduction}
%
%
Let $N\geq 3$ be a positive integer and let $\bb R_+^N =\{(y_1, \cdots, y_N)\in \bb R^N: y_N>0\}$ denote the upper half of $N$-dimensional Euclidean space. Fix a positive integer $m$ and set $J = \{1, \cdots, m\}$. Let $\mathcal A = [a_{ij}]$ be an $m\times m$ matrix with nonnegative entries. We are concerned with the classical solutions of the semi-linear elliptic system 

\begin{equation}\label{eq:Interior-Equations}
	\lap u_i + \prod_{j = 1}^m u_j^{a_{ij}} 
		= 
		0
		\qquad
		\text{ in } \Omega\subset \bb R^N\; 
		\text{ for all } i\in J. 
\end{equation}
This system and its variants have been studied extensively in numerous contexts. For example, \eqref{eq:Interior-Equations} arises as the system of equations for a steady-state solution to the corresponding parabolic reaction-diffusion system. In particular, when $m=2$ the system 

\begin{equation}\label{eq:Parabolic-RDE-system}
	\begin{cases}
		\frac{\partial u_1}{\partial t}
			= 
			\lap u_1 + u_1^{a_{11}} u_2^{a_{12}}
			&
			\text{ for } y\in \Omega, \; t>0
			\\
		\frac{\partial u_2}{\partial t}
			= 
			\lap u_2 + u_1^{a_{21}}u_2^{a_{22}}
			&
			\text{ for } y\in \Omega, \; t>0
	\end{cases}
\end{equation}
has received much attention. For example, when $a_{11} = a_{22} = 0$ \eqref{eq:Parabolic-RDE-system} gives a simple model for heat propagation in a two-component combustible mixture \cite{EscobedoHerrero1991}.  Variants of \eqref{eq:Parabolic-RDE-system} have also been used to model the diffusing densities of two biological species when each specie finds its subsidence from the activity of the other specie \cite{LiWang2005}. It is well-known that a thorough understanding of \eqref{eq:Interior-Equations} is highly beneficial to obtaining an understanding of \eqref{eq:Parabolic-RDE-system}. For example, under appropriate assumptions on $\mathcal A$, in \cite{Mitidieri1993} and \cite{Mitidieri1996} Mitidieri proved nonexistence results for \eqref{eq:Interior-Equations} when $\Omega = \bb R^N$ and $m = 2$. These results were refined by Zheng in \cite{Zheng1999} and then used to derive blow-up (in time) estimates for solutions of \eqref{eq:Parabolic-RDE-system} that satisfy suitable initial and boundary conditions. For more results concerning these parabolic systems and their variants the reader is referred to \cite{Levine1990}, \cite{DengLevine2000} and the references therein.\\ 

An interesting case of \eqref{eq:Interior-Equations} arises when $\mathcal A$ satisfies

\begin{equation}\label{eq:Matrix-A-Assumptions}
	\begin{cases}
		a_{ij} \geq 0 
			& 
			\text{ for all } (i,j)\in J\times J
			\\
		\mathcal A 
			\text{ is irreducible }
			&
			\\
		\sum_{j = 1}^m a_{ij} = \frac{N+2}{N -2} 
			&
			\text{ for all } i\in J. 
	\end{cases}
\end{equation}
Recall that an $m\times m$-matrix $\mathcal A$ is called \emph{irreducible} if there is no partition $J = I_1\cup I_2$ such that $a_{ij} = 0$ for all $i\in I_1$, and $j\in I_2$. When $m = 1$ equations \eqref{eq:Interior-Equations} reduce to 

\begin{equation}\label{eq:Yamabe}
	\lap u + Ku^{(N+2)/(N-2)}
		= 
		0
\end{equation}
 with $K = 1$. Equation \eqref{eq:Yamabe} has been studied extensively as it arises in relation to the famous Yamabe problem. The Yamabe problem asks whether it is always possible to conformally deform the metric $g$ of a given smooth compact Riemannian manifold to a metric $\hat g = u^{4/(N-2)}g$ whose scalar curvature is constant. Through the works of Trudinger \cite{Trudinger1968}, Aubin \cite{Aubin1976} and Schoen \cite{Schoen1984}, the Yamabe problem was proven affirmative. See \cite{LeeParker1987} and the references therein for results regarding the Yamabe problem. For $\mathcal A$ satisfying \eqref{eq:Matrix-A-Assumptions} and $\Omega =\bb R^N$, the classical solutions of \eqref{eq:Interior-Equations} were classified by Chipot, Shafrir and Wolansky in \cite{ChipotShafrirWolansky1997} (see also \cite{DeFigueiredoFelmer1994}). Their result is the following.

\begin{theorem}[Chipot, Shafrir and Wolansky \cite{ChipotShafrirWolansky1997}]
\label{theorem:Interior-System}
	Suppose $\mathcal A$ satisfies \eqref{eq:Matrix-A-Assumptions}. If $u_1, \cdots, u_m$ are positive $C^2(\bb R^N)$-solutions of \eqref{eq:Interior-Equations} with $\Omega = \bb R^N$ then
	
	\begin{equation}\label{eq:CSW-Classification}
		u_i(y)
			= 
			\frac{\beta_i}
				{\left( \sigma^2 + \abs{y - y^0}^2\right)^{(N-2)/2}}
		\qquad\text{ for all } i\in J,
	\end{equation}
	for some $y^0\in \bb R^N$ and some positive constants $\sigma^2$ and $\beta_1, \cdots, \beta_m$ satisfying
	
	\begin{equation}\label{eq:Sigma-Beta-Condition}
		\log\beta_i
			= 
			\sum_{j =1}^m a_{ij}\log \beta_j 
			-
			\log\left(\sigma^2 N(N -2)\right)
		\qquad\text{ for all } i \in J. 
	\end{equation}
\end{theorem}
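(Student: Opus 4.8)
The plan is to reduce the system to the scalar Yamabe equation via a clever change of unknowns, and then invoke the classical Caffarelli–Gidas–Spruck classification. Concretely, because $\mathcal A$ is irreducible with constant row sums $\frac{N+2}{N-2}$, the Perron–Frobenius theorem guarantees a positive left eigenvector; more usefully, one looks for a solution of the linear system $\log\beta_i = \sum_j a_{ij}\log\beta_j - c$ and seeks the $u_i$ in the form $u_i = \beta_i v^{p_i}$ for suitable positive exponents $p_i$ and a single positive function $v$. The natural first step is to take logarithms: set $w_i = \log u_i$, so that each $w_i$ is a (negative-definite-source) solution of $\Delta w_i + |\nabla w_i|^2 + e^{-w_i}\prod_j e^{a_{ij}w_j} = 0$. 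A slicker route, which I would actually pursue, is to first prove that all the $u_i$ are \emph{mutually proportional}: I would show $u_i/u_k$ is constant for all $i,k$.

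To establish proportionality, fix indices $i,k$ and consider $\phi = u_i/u_k$. Using the equations for $u_i$ and $u_k$ one computes $\Delta(u_i u_k) = u_k\Delta u_i + u_i \Delta u_k + 2\nabla u_i\cdot\nabla u_k$, and more to the point one examines the elliptic equation satisfied by $\phi$; the key algebraic input is that the nonlinearities $\prod_j u_j^{a_{ij}}$ and $\prod_j u_j^{a_{kj}}$ have the \emph{same} homogeneity degree $\frac{N+2}{N-2}$ in $(u_1,\dots,u_m)$. The standard trick here (as in Chipot–Shafrir–Wolansky) is to introduce $U = \sum_i t_i u_i$ for a well-chosen positive vector $(t_i)$ coming from the eigen-structure of $\mathcal A$, show that $U$ itself solves a scalar equation of the form $\Delta U + c\,U^{(N+2)/(N-2)} = 0$ up to controllable error, and simultaneously use a Picone-type or maximum-principle argument to force the ratios to be constant. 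Once proportionality $u_i = \beta_i \tilde u$ is known, substituting into the $i$-th equation gives $\Delta \tilde u + \beta_i^{-1}\big(\prod_j \beta_j^{a_{ij}}\big)\tilde u^{(N+2)/(N-2)} = 0$; consistency across $i$ forces precisely the relation \eqref{eq:Sigma-Beta-Condition} on the $\beta_i$ (with $\sigma$ still free), and $\tilde u$ solves the scalar equation \eqref{eq:Yamabe} with a positive constant $K = N(N-2)\sigma^{-2}\,\beta_i^{-1}\prod_j\beta_j^{a_{ij}}$.

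The final step is to apply the Caffarelli–Gidas–Spruck theorem: every positive $C^2(\bb R^N)$ solution of $\Delta \tilde u + K\tilde u^{(N+2)/(N-2)} = 0$ on all of $\bb R^N$ is, after scaling, of the form $\tilde u(y) = \big(\sigma^2 + |y-y^0|^2\big)^{-(N-2)/2}$ up to the normalization constant absorbed into $\beta_i$; this yields \eqref{eq:CSW-Classification}. I expect the \textbf{main obstacle} to be the proportionality step: a priori the $u_i$ could decay at different rates or be genuinely coupled, so one must exploit irreducibility carefully — presumably through a connectedness/graph argument on $J$ combined with a strong maximum principle applied to a cleverly weighted combination such as $\log u_i - \log u_k$ — to rule out nontrivial relative behavior. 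Establishing enough decay/regularity at infinity to make the maximum principle or the moving-plane/Kelvin-transform machinery applicable (so that the scalar classification theorem genuinely applies) is the technical heart of the argument.
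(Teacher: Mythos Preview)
The paper does not prove Theorem~1.1; it is quoted as a known result from \cite{ChipotShafrirWolansky1997}. The paper's own contribution is Theorem~1.3, whose proof (Sections~2--4) runs the method of moving spheres \emph{directly on the coupled system}: one shows that for every center $x$ the critical radius $\overline\lambda(x)$ is finite and that each $w_{i,x,\overline\lambda(x)}$ vanishes identically, with the irreducibility of $\mathcal A$ used only to propagate the vanishing of one $w_i$ to all the others (Lemma~3.2). The common center $y^0$ and common scale $\sigma$ then fall out of the calculus lemma (Lemma~5.3) and the subsequent ODE analysis; proportionality of the $u_i$ is a \emph{consequence} of this symmetry, not an input. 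This is also the strategy of the original Chipot--Shafrir--Wolansky argument.

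Your proposal inverts that logic: you want to establish $u_i = \beta_i\tilde u$ first and then quote Caffarelli--Gidas--Spruck for the scalar function $\tilde u$. The difficulty you flag as the ``main obstacle'' is in fact a genuine gap. None of the mechanisms you list actually closes it. Writing $\psi = \log u_i - \log u_k$, one finds
\[
\Delta\psi \;=\; \frac{1}{u_k}\prod_j u_j^{a_{kj}} - \frac{1}{u_i}\prod_j u_j^{a_{ij}} \;+\; \frac{|\nabla u_k|^2}{u_k^2} - \frac{|\nabla u_i|^2}{u_i^2},
\]
and there is no sign structure here that a maximum principle can exploit without already knowing something about the relative sizes of the $u_j$. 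A Picone-type identity runs into the same problem: the two right-hand sides $\prod_j u_j^{a_{ij}}$ and $\prod_j u_j^{a_{kj}}$ share only their total degree, not their dependence on the individual $u_j$, so no cancellation occurs. The Perron--Frobenius left eigenvector lets you form $U = \sum_i t_i u_i$ with $\Delta U = -\sum_i t_i\prod_j u_j^{a_{ij}}$, but the right-hand side is \emph{not} $cU^{(N+2)/(N-2)}$ unless the $u_i$ are already proportional; the inequality one gets from convexity/AM--GM goes the wrong way to feed into the scalar classification. In short, every route to proportionality that avoids a direct symmetry argument seems to require the conclusion as a hypothesis. The workable path is the one the paper (and \cite{ChipotShafrirWolansky1997}) takes: run moving spheres on the full vector $(u_1,\dots,u_m)$, use irreducibility to synchronize the critical positions, and read off the explicit form---including proportionality---at the end.
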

\noindent This theorem is the system-generalization of the classification of entire solutions to \eqref{eq:Yamabe} given in \cite{CaffarelliGidasSpruck1989}. \\

Many interesting questions involving variants of \eqref{eq:Yamabe} have been considered. For example, for real numbers $K$ and $c$ the equations

\begin{equation}\label{eq:Boundary-Yamabe-Equation}
	\begin{cases}
		\lap u + K u^{(N+2)/(N-2)} = 0
			&
			\text{ in } \bb R_+^N
			\\
		\frac{\partial u}{\partial y_N} = c u^{N/(N-2)}
			&
			\text{ on } \bdy\bb R_+^N  
	\end{cases}
\end{equation}
arise in relation to the boundary-Yamabe problem which seeks to determine whether the metric $g$ of smooth compact Riemannian manifold $M$ with boundary can be conformally deformed into a metric $\hat g$ such that both the scalar curvature and the boundary mean curvature of $\hat g$ are constant. The boundary-Yamabe problem is still open. For a detailed discussion on the boundary-Yamabe problem, the reader is referred to Escobar \cite{Escobar1992Annals, Escobar1992DG}, Han-Li \cite{HanLi1999, HanLi2000}, Marques \cite{Marques2005} \nocite{DjadliMalchiodiMohameden2003} and the references therein. The solutions of equations \eqref{eq:Boundary-Yamabe-Equation} were classified separately by Li and Zhu in \cite{LiZhu1995} and Chipot, Shafrir and Fila in \cite{ChipotShafrirFila1996}. Later in \cite{LiZhang2003}, the solutions of \eqref{eq:Boundary-Yamabe-Equation} with more general nonlinearities were classified. The result is as follows

\begin{theorem}[Li-Zhu \cite{LiZhu1995},  Chipot-Shafrir-Fila \cite{ChipotShafrirFila1996} and Li-Zhang \cite{LiZhang2003}]
\label{theorem:Boundary-Yamabe-Classification}
	If $u$ is a non-negative $C^2(\bb R_+^N)\cap C^1(\overline{\bb R_+^N})$-solution of \eqref{eq:Boundary-Yamabe-Equation} with $K = N(N-2)$, then either $u\ident 0$ or there exists $\sigma>0$ and $(y_1^0, \cdots, y_{N -1}^0)\in \bdy\bb R_+^N$ such that
	
	\begin{equation*}
		u(y)
			= 
			\left(
				\frac{\sigma}{\sigma^2 + \abs{y - y^0}^2}
			\right)^{(N-2)/2}
		\qquad \text{ for all } y\in \bb R_+^N, 
	\end{equation*}
	where $y^0 = (y_1^0,\cdots, y_{N - 1}^0, y_N^0)$ and $y_N^0 = \sigma c/(N-2)$. 
\end{theorem}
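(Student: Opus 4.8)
\medskip
\noindent\emph{Plan of proof.}
The plan is to prove Theorem~\ref{theorem:Boundary-Yamabe-Classification} by the method of moving spheres, exploiting the conformal covariance of the boundary-value problem \eqref{eq:Boundary-Yamabe-Equation} at the critical exponents. Assume $u\not\ident 0$. The first step is to upgrade non-negativity to strict positivity on $\overline{\bb R_+^N}$: since $\lap u=-Ku^{(N+2)/(N-2)}\le 0$, $u$ is superharmonic, so the strong minimum principle gives either $u\ident 0$ or $u>0$ in $\bb R_+^N$; and if $u$ vanished at some $p\in\bdy\bb R_+^N$ the Hopf boundary lemma would force $\partial u/\partial y_N(p)>0$, contradicting the boundary condition $\partial u/\partial y_N(p)=c\,u(p)^{N/(N-2)}=0$. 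The second step records the structural fact that, for $x\in\bdy\bb R_+^N$ and $\lambda>0$, the Kelvin-type transform
\[
	u_{x,\lambda}(y)=\Big(\frac{\lambda}{\abs{y-x}}\Big)^{N-2}\,u\Big(x+\frac{\lambda^2(y-x)}{\abs{y-x}^2}\Big)
\]
maps $\overline{\bb R_+^N}\setminus\{x\}$ to itself (because the centre $x$ lies on $\bdy\bb R_+^N$) and is again a solution of \eqref{eq:Boundary-Yamabe-Equation} on $\bb R_+^N\setminus\{x\}$; this expresses the conformal covariance of $\lap$ at the critical interior exponent together with that of the boundary operator $u\mapsto \partial u/\partial y_N-c\,u^{N/(N-2)}$ at the critical boundary exponent.

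The core of the argument is to run the moving-sphere process at each $x\in\bdy\bb R_+^N$. One checks in the standard way that the process can be started: there is $\lambda_0(x)>0$ with $u_{x,\lambda}\le u$ on $\{y\in\bb R_+^N:\abs{y-x}\ge\lambda\}$ for all $0<\lambda\le\lambda_0(x)$, using the $C^1$-regularity and positivity of $u$ near $x$ together with the decay $u_{x,\lambda}(y)=O(\abs{y}^{2-N})$ as $\abs{y}\to\infty$ and a mild lower bound on $u$ coming from superharmonicity. Setting
\[
	\bar\lambda(x)=\sup\big\{\lambda>0:\ u_{x,\mu}\le u\ \text{ on }\ \{\abs{y-x}\ge\mu\}\ \text{ for all }\ \mu\in(0,\lambda]\big\},
\]
I would then establish the standard dichotomy. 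If $\bar\lambda(x)=+\infty$ for every $x\in\bdy\bb R_+^N$, the elementary lemma on moving spheres (half-space version) forces $u$ to be constant, which is impossible since no positive constant solves $\lap u+Ku^{(N+2)/(N-2)}=0$. Otherwise $\bar\lambda(x_0)<+\infty$ for some $x_0$, and then $u_{x_0,\bar\lambda(x_0)}\ident u$: the non-negative difference $u-u_{x_0,\bar\lambda(x_0)}$ satisfies a linear elliptic inequality on $\{\abs{y-x_0}>\bar\lambda(x_0)\}\cap\bb R_+^N$ together with a linear oblique boundary condition on $\bdy\bb R_+^N$, and if it were not identically zero the interior strong maximum principle and a Hopf-type boundary-point lemma for oblique-derivative problems would both contradict the maximality of $\bar\lambda(x_0)$. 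This identity yields the sharp decay $u(z)=\big(\bar\lambda(x_0)^{N-2}u(x_0)+o(1)\big)\abs{z}^{2-N}$ as $\abs{z}\to\infty$, and with this sharp decay one shows $\bar\lambda(x)<+\infty$, hence $u_{x,\bar\lambda(x)}\ident u$, for \emph{every} $x\in\bdy\bb R_+^N$.

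Once $u_{x,\bar\lambda(x)}\ident u$ for all $x\in\bdy\bb R_+^N$, the rigidity form of the lemma on moving spheres gives $u(y)=a\big(\sigma^2+\abs{y-y^0}^2\big)^{-(N-2)/2}$ for some constants $a,\sigma>0$ and $y^0\in\bb R^N$, and substituting this back into \eqref{eq:Boundary-Yamabe-Equation} forces $a=\sigma^{(N-2)/2}$ (from the interior equation) and $y^0=(y_1^0,\dots,y_{N-1}^0,\sigma c/(N-2))$ (from the boundary condition), with $\sigma$ and $y_1^0,\dots,y_{N-1}^0$ free, which is exactly the claimed form. I expect the main obstacle to be the step promoting the touching inequality $u_{x_0,\bar\lambda(x_0)}\le u$ to the identity: because of the nonlinear Neumann condition the interior Hopf lemma must be supplemented by a boundary-point lemma for the linearized oblique-derivative problem on $\bdy\bb R_+^N$, and one must rule out the difference of $u$ and its reflected image first touching zero on the boundary rather than in the interior. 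A secondary difficulty is controlling $u$ and $u_{x,\lambda}$ near infinity well enough both to start the process and to justify the maximum principle on the unbounded set $\{\abs{y-x}>\lambda\}\cap\bb R_+^N$; an alternative, closer to the original proofs of Li--Zhu and Chipot--Shafrir--Fila, would first establish $u(y)=O(\abs{y}^{2-N})$ via the Kelvin transform and an asymptotic analysis and then apply the method of moving planes in directions parallel to $\bdy\bb R_+^N$.
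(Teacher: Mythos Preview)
Your overall strategy matches the paper's (specialized to $m=1$): start the moving spheres, prove the critical-radius identity $u\equiv u_{x,\bar\lambda(x)}$, rule out $\bar\lambda\equiv\infty$, and then read off the explicit form. Two steps in your sketch, however, are stated more strongly than what the available lemmas actually give, and each needs the paper's argument to be completed.

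First, if $\bar\lambda(x)=\infty$ for every $x\in\bdy\bb R_+^N$, the half-space moving-sphere lemma (Lemma~\ref{lemma:t-Dependence-Only}) does \emph{not} force $u$ to be constant; it only forces $u(y)=u(y_N e_N)$, i.e.\ dependence on $y_N$ alone. The contradiction then comes from the resulting ODE $u''(t)=-Ku(t)^{(N+2)/(N-2)}$ on $[0,\infty)$ with $u>0$: strict concavity forces $u'>0$ throughout, but then $u\ge u(0)>0$ gives $u''\le -Ku(0)^{(N+2)/(N-2)}<0$, so $u'$ eventually becomes negative. This is exactly the mechanism in Lemma~\ref{lemma:All-Critical-Lambdas-Unbounded}.

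Second, and more substantively, the rigidity lemma you invoke (Lemma~\ref{lemma:Critical-Kelvin-Symmetry}) requires the Kelvin symmetry for centers ranging over all of $\bb R^N$, whereas here you only have it for $x\in\bdy\bb R_+^N=\bb R^{N-1}$. Consequently that lemma only pins down the restriction $u|_{\bdy\bb R_+^N}$, not $u$ on the interior of the half-space. The paper closes this gap in Section~\ref{section:DeduceFormofU}: once the boundary trace is known to be $A\big(d^2+\abs{y'-\bar x}^2\big)^{-(N-2)/2}$, it conformally inverts $\bb R_+^N$ onto a ball $B(Q,2d)$ via a map $T$ sending each critical sphere $\bdy B(x,\bar\lambda(x))$ to a hyperplane through the center $Q$; the transformed function $v(z)=(2d/\abs{z-P})^{N-2}u(Tz)$ is then radial about $Q$ and satisfies a scalar ODE whose unique solution is the standard bubble, and undoing $T$ recovers $u$ on all of $\overline{\bb R_+^N}$. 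Without this step (or an equivalent interior uniqueness argument), your final deduction of the form of $u$ is incomplete.
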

%
In this paper, an analogue of Theorem \ref{theorem:Boundary-Yamabe-Classification} is proven for the generalization of \eqref{eq:Boundary-Yamabe-Equation} to a system of equations. To generalize the boundary nonlinearity in \eqref{eq:Boundary-Yamabe-Equation} let $c_1, \cdots, c_m$ be real numbers and let $\mathcal B = [b_{ij}]$ be an $m\times m$ matrix satisfying 

\begin{equation}\label{eq:Matrix-B-Assumptions}
	\begin{cases}
		b_{ij} \geq 0
			&
			\text{ for all } (i,j)\in J\times J
			\\
		\sum_{j = 1}^m b_{ij} = \frac{N}{N - 2}
			&
			\text{ for all } i \in J
			\\
		b_{ij} = \frac{N}{N-2}\delta_{ij}
			&
			\text{ for all } i \in J \text{ such that } c_i \geq 0 
	\end{cases}
\end{equation}
and consider the system

\begin{equation}\label{eq:Main-Equations}
	\begin{cases}
		\lap u_i + \prod_{j = 1}^m u_j^{a_{ij}} = 0
			& 
			\text{ in } \bb R_+^N
			\\
		\frac{\partial u_i}{\partial y_N} = c_i \prod_{j = 1}^m u_j^{b_{ij}}
			&
			\text{ on } 
			\bdy \bb R_+^N
			\\
		u_i>0
			& 
			\text{ on } 
			\overline{\bb R_+^N}
	\end{cases}
	\qquad\text{ for all } i\in J. 
\end{equation}
Our main theorem is as follows. 

\begin{theorem}\label{theorem:Main}
	Suppose $\mathcal A$ satisfies \eqref{eq:Matrix-A-Assumptions} and $\mathcal B$ satisfies \eqref{eq:Matrix-B-Assumptions}. If $(u_1, \cdots, u_m)$ is a $C^2(\bb R_+^N)\cap C^1(\overline{\bb R_+^N})$-solution of \eqref{eq:Main-Equations} then there exist positive constants $\sigma, \beta_1, \cdots, \beta_m$ satisfying \eqref{eq:Sigma-Beta-Condition} and $(y_1^0, \cdots, y_{N-1}^0)\in\bdy \bb R_+^N$ such that  $u_i$ is given by \eqref{eq:CSW-Classification} with $y^0 = (y_1^0, \cdots, y_{N-1}^0, y_N^0)$, where
	
\begin{equation}\label{eq:y0-Condition}
	y_{N}^0 = \sigma^2 Nc_i \prod_{j = 1}^m \beta_j^{b_{ij} - a_{ij}}
		\qquad
		\text{ for all } i\in J. 
\end{equation}
\end{theorem}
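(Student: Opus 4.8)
The plan is to adapt the method of moving spheres (equivalently, the method of moving planes together with Kelvin transforms), used by Li--Zhu and Chipot--Shafrir--Fila for the scalar equation \eqref{eq:Boundary-Yamabe-Equation} and by Chipot--Shafrir--Wolansky \cite{ChipotShafrirWolansky1997} for the interior system \eqref{eq:Interior-Equations}, to the coupled boundary value problem \eqref{eq:Main-Equations}. The starting observation is that the row-sum hypotheses in \eqref{eq:Matrix-A-Assumptions} and \eqref{eq:Matrix-B-Assumptions} make \eqref{eq:Main-Equations} conformally covariant: for $x\in\bdy\bb R_+^N$ and $\lambda>0$, the Kelvin transforms
\[
	u_{i,x,\lambda}(y)
		=
		\left(\frac{\lambda}{\abs{y-x}}\right)^{N-2}
		u_i\!\left(x+\frac{\lambda^2(y-x)}{\abs{y-x}^2}\right),
	\qquad i\in J,
\]
map $\overline{\bb R_+^N}\setminus\{x\}$ onto itself and again solve \eqref{eq:Main-Equations} on $\bb R_+^N\setminus\{x\}$ with the same exponents $a_{ij}$, $b_{ij}$ and constants $c_i$. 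The goal is to show that for every boundary point $x$ there is a \emph{finite} critical radius $\bar\lambda(x)>0$ with $u_{i,x,\bar\lambda(x)}\ident u_i$ for all $i\in J$, and then to invoke the elementary calculus lemma (as in \cite{LiZhu1995,LiZhang2003}) that characterizes functions enjoying such a family of reflection symmetries as the bubbles \eqref{eq:CSW-Classification} --- necessarily with a common center $y^0$ and a common parameter $\sigma$.

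Before the moving spheres can be started, one needs control of the solution near infinity. Since $\lap u_i=-\prod_j u_j^{a_{ij}}\le 0$, each $u_i$ is a positive superharmonic function on $\bb R_+^N$; combining this with the boundary condition, the sign hypothesis on the $c_i$ in \eqref{eq:Matrix-B-Assumptions}, a Kelvin transform centered at a boundary point, and a rescaling/blow-up (or preliminary moving-plane) argument, I would prove that every $u_i$ is bounded and satisfies two-sided bounds $c^{-1}(1+\abs{y})^{-(N-2)}\le u_i(y)\le c(1+\abs{y})^{-(N-2)}$ on $\overline{\bb R_+^N}$ for some $c>0$. This decay, together with the pointwise behavior of $u_i$ near $x$ and of $u_{i,x,\lambda}$ near infinity, yields the ``get-started'' step: for each fixed $x\in\bdy\bb R_+^N$ there is $\lambda_0(x)>0$ with $u_{i,x,\lambda}\le u_i$ throughout $\{\,\abs{y-x}\ge\lambda\,\}\cap\bb R_+^N$ for all $\lambda\le\lambda_0(x)$ and all $i\in J$.

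Now let $\bar\lambda(x)$ be the supremum of the $\lambda$ for which the comparison $u_{i,x,\mu}\le u_i$ in $\{\abs{y-x}\ge\mu\}\cap\bb R_+^N$ holds for all $\mu\le\lambda$ and all $i\in J$. If $\bar\lambda(x_0)=\infty$ for some $x_0$, the comparison at all radii forces (again through the calculus lemma) each $u_i$ to be bounded below away from zero near infinity, contradicting the upper decay just established; hence $\bar\lambda(x)<\infty$ for every $x$. At the finite critical radius the differences $w_i=u_i-u_{i,x,\bar\lambda(x)}\ge 0$ solve a cooperative linearized system on $\bb R_+^N\setminus\{x\}$, whose boundary conditions are handled using the hypothesis on the $c_i$ in \eqref{eq:Matrix-B-Assumptions} --- for indices with $c_i\ge 0$ the relation $b_{ij}=\tfrac{N}{N-2}\delta_{ij}$ decouples the boundary condition into a single-component Robin-type relation, while for $c_i<0$ the relevant boundary term carries a favorable sign. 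By the strong maximum principle and the Hopf lemma each $w_i$ is either $\ident 0$ or strictly positive; a standard obstruction argument at $\lambda=\bar\lambda(x)$ (the comparison cannot be continued past $\bar\lambda(x)$) forces $w_i\ident 0$ for at least one $i$, and then the irreducibility of $\mathcal A$ propagates this to all $i\in J$. Thus $u_{i,x,\bar\lambda(x)}\ident u_i$ for all $i\in J$ and all $x\in\bdy\bb R_+^N$, and the calculus lemma yields $u_i(y)=\beta_i(\sigma^2+\abs{y-y^0}^2)^{-(N-2)/2}$ for all $i$, with a common $\sigma>0$ and a common center $y^0=(y_1^0,\dots,y_{N-1}^0,y_N^0)$ where $(y_1^0,\dots,y_{N-1}^0)\in\bdy\bb R_+^N$.

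It remains to derive the algebraic constraints. Substituting this form into the interior equation and using $\sum_j a_{ij}=(N+2)/(N-2)$ gives $\prod_j\beta_j^{a_{ij}}=\sigma^2N(N-2)\beta_i$, which is \eqref{eq:Sigma-Beta-Condition}. Substituting into the boundary condition, evaluating $\partial u_i/\partial y_N$ on $\{y_N=0\}$ and using $\sum_j b_{ij}=N/(N-2)$ gives $(N-2)\beta_i y_N^0=c_i\prod_j\beta_j^{b_{ij}}$, and eliminating $\beta_i$ by means of \eqref{eq:Sigma-Beta-Condition} produces exactly \eqref{eq:y0-Condition} (and, as a by-product, shows $c_i\prod_j\beta_j^{b_{ij}-a_{ij}}$ is the same for every $i$, consistent with $y_N^0$ being a single number). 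The step I expect to be most delicate is the moving-spheres argument for a \emph{system}: securing enough a priori decay to start the procedure without assuming boundedness, and, at the critical radius, combining the strong maximum principle and the Hopf lemma with the irreducibility of $\mathcal A$ to force \emph{all} components to coincide simultaneously with their Kelvin transforms --- a step complicated by the Neumann-type boundary condition, which genuinely couples the components when some $c_i<0$, and for which the hypothesis $b_{ij}=\tfrac{N}{N-2}\delta_{ij}$ whenever $c_i\ge0$ is exactly what keeps the maximum-principle argument usable.
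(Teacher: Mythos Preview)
Your overall strategy --- moving spheres centered at boundary points, strong maximum principle and Hopf lemma at the critical radius, irreducibility of $\mathcal A$ to propagate the vanishing of one $w_i$ to all of them, then the calculus lemma --- is the paper's strategy. Two points, however, need correction.

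First, your route to $\bar\lambda(x)<\infty$ via an a priori upper bound $u_i(y)\le c(1+\abs{y})^{-(N-2)}$ differs from the paper and is not justified in your sketch. Such an upper bound is typically a \emph{consequence} of the classification rather than an input to it, and the phrase ``rescaling/blow-up (or preliminary moving-plane) argument'' does not indicate how to obtain it for the coupled system without circularity. The paper avoids this entirely: it shows that if $\bar\lambda(x_0)=\infty$ for one $x_0$ then $\bar\lambda(x)=\infty$ for every $x\in\bdy\bb R_+^N$, whence (by the one-sided calculus lemma) each $u_i$ depends only on $y_N$; the resulting ODE system $u_i''=-\prod_j u_j^{a_{ij}}$, $u_i>0$ on $[0,\infty)$ is then shown directly to have no solution. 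Only the \emph{lower} bound $\liminf_{\abs y\to\infty}\abs y^{N-2}u_i(y)>0$ is ever used, and only to start the spheres.

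Second, and more seriously, the calculus lemma does \emph{not} give the bubble form on all of $\overline{\bb R_+^N}$ as you assert. The identity $u_{i,x,\bar\lambda(x)}\equiv u_i$ is available only for centers $x\in\bdy\bb R_+^N$, so the lemma applies only to the restrictions $u_i\big|_{\bdy\bb R_+^N}$ and yields $u_i(y')=A_i(d_i^2+\abs{y'-\bar x_i}^2)^{-(N-2)/2}$ on $\bb R^{N-1}$. One must then (i) check that $d_i$ and $\bar x_i$ are independent of $i$, and (ii) extend the formula into the interior. Step (ii) is genuine work that your proposal omits: the paper performs a conformal inversion $T$ sending $\bb R_+^N$ to a ball $B(Q,2d)$, observes that the moving-spheres symmetries become reflection symmetries about every hyperplane through $Q$, so the transformed functions $v_i$ are radial; the radial ODE system is solved explicitly, and transforming back produces \eqref{eq:CSW-Classification} on all of $\overline{\bb R_+^N}$ together with the relations \eqref{eq:Sigma-Beta-Condition} and \eqref{eq:y0-Condition}. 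Without this step you cannot legitimately substitute the bubble form into the interior equation.
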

\begin{remark}
	The third item of \eqref{eq:Matrix-B-Assumptions} says that if $i\in J$ is an index for which $c_i\geq0$, then the boundary equation for $u_i$ is 
	
	\begin{equation*}
		\frac{\partial u_i}{\partial y_N}
			= 
			c_iu_i^{N/(N-2)}
		\qquad
		\text{ on } 
		\bdy \bb R_+^N. 
	\end{equation*}
	This assumption is made for convenience as it makes some of the proofs simpler. See, for example the proof of Claim \ref{claim:Outter-Normal-Derivative}. 
\end{remark}

The proof of Theorem \ref{theorem:Main} is via the method of moving spheres and is inspired by the proofs of Theorems \ref{theorem:Boundary-Yamabe-Classification} and \ref{theorem:Interior-System} given in \cite{LiZhang2003} and \cite{ChipotShafrirWolansky1997} respectively. The organization of this paper is as follows. In Section \ref{section:Moving Spheres Can Start} we show that the moving sphere process can start. In Section \ref{section:AllCriticalWvanish} we obtain a symmetry relation between $u_i$ and its ``critical" Kelvin transformations. In Section \ref{section:DeduceFormofU} we first use a calculus lemma to deduce the form of the restriction of $u_i$ to $\bdy \bb R_+^N$. Next we transform the problem defined on $\bb R_+^N$ to a new problem defined on a ball. After determining that the solutions of the transformed problem must be radial, a system of ODE is obtained and the solution to this system is determined. The conclusion of Theorem \ref{theorem:Main} will follow after returning to the original problem. \\
Throughout, $C$ will be used to denote a positive constant depending only on $N$. The value of $C$ may change from line to line. 
\section{The Moving Sphere Process Can Start}\label{section:Moving Spheres Can Start}
%
%

Let $u_1, \cdots ,u_m$ be as in the hypotheses of Theorem \ref{theorem:Main}. As the proof of Theorem \ref{theorem:Main} is via the method of moving spheres, we wish to consider the following $\bdy \bb R_+^N\times(0,\infty)$-indexed family of Kelvin inversions of $u_i$. For $x\in \bdy \bb R_+^N$ and $\lambda>0$ let

\begin{equation*}
	\Sigma_{x, \lambda} = \bb R_+^N\setminus \overline B_{\lambda}(x)
\end{equation*}
and define 

\begin{equation*}
	u_{i, x, \lambda}(y)
		= 
		\left(\frac{\lambda}{\abs{ y - x}}\right)^{N - 2}
		u_i\left( x + \frac{\lambda ^2(y - x)}{\abs{ y - x}^2}\right)
	\qquad
	\text{ for } y\in \overline {\bb R_+^N}\setminus \{x\}\; 
	\text{ and } i \in J. 
\end{equation*}
By using \eqref{eq:Matrix-A-Assumptions}, \eqref{eq:Matrix-B-Assumptions} and \eqref{eq:Main-Equations} and computing directly, one may verify that $u_{1, x, \lambda}, \cdots, u_{m, x, \lambda}$ satisfy

\begin{equation}\label{eq:Inverted-Main-Equations}
	\begin{cases}
		\lap u_{i, x, \lambda} + \prod_{j = 1}^m u_{j, x, \lambda}^{a_{ij}}
			= 0
			&
			\text{ in } \bb R_+^N
			\\
		\frac{\partial u_{i, x, \lambda}}{\partial y_N}
			= c_i \prod_{j = 1}^m u_{j, x, \lambda}^{b_{ij}}
			&
			\text{ on } \bdy \bb R_+^N \setminus\{x\}
			\\
		u_{i, x, \lambda}>0
			& 
			\text{ in }\overline{ \bb R_+^N}\setminus \{x\}
	\end{cases}
	\qquad
	\text{ for all } i\in J. 
\end{equation}
Since we want to compare $u_i$ to $u_{i, x, \lambda}$, we define the differences

\begin{equation*}
	w_{i,x, \lambda}(y) 
		= 
		u_i(y) - u_{i, x, \lambda}(y)
	\qquad
	\text{ for } y\in \overline{\bb R_+^N}\setminus\{x\}\; 
	\text{ and } i\in J. 
\end{equation*}
Using \eqref{eq:Main-Equations} and \eqref{eq:Inverted-Main-Equations} one can verify that $w_{i, x, \lambda}$ satisfies

\begin{equation}\label{eq:wixlambda-Equations}
	\begin{cases}
		- \lap w_{i,x, \lambda} 
			= 
			\prod_{j= 1}^m u_j^{a_{ij}} - \prod_{j = 1}^m u_{j, x, \lambda}^{a_{ij}}
			&
			\text{ in } \Sigma_{x, \lambda}
			\\
			& \\
		\frac{\partial w_{i, x, \lambda}}{\partial y_N}
			= 
			c_i \left( \prod_{j = 1}^m u_j^{b_{ij}} - \prod_{j = 1}^m u_{j, x, \lambda}^{b_{ij}}\right)
			&
			\text{ on } \bdy \Sigma_{x, \lambda}\cap \bdy \bb R_+^N
	\end{cases}
	\qquad
	\text{ for all } i \in J. 
\end{equation}
Moreover, 

\begin{equation}\label{eq:wi-Vanish-on-Blambda-boundary}
	w_{i, x, \lambda} = 0
		\qquad
		\text{ on }
		\bdy \Sigma_{x, \lambda} \cap \bdy B_\lambda(x)\; 
		\text{ for all } i \in J. 
\end{equation}
As the proofs of many of the propositions given will be similar for $x= 0$ and for general $x\in \bdy \bb R_+^N$, when considering $x = 0$ we will use the following simplified notation
	
	\begin{equation}\label{eq:x=0-Notation}
		\Sigma_{0,\lambda} = \Sigma_\lambda, 
		\qquad
		u_{i, 0, \lambda} = u_{i, \lambda}
		\qquad
		\text{ and }
		\qquad
		w_{i, 0, \lambda} = w_{i, \lambda}. 
	\end{equation}
	%
\begin{prop}\label{prop:Moving-Spheres-Can-Start}
	For each $x\in \bdy \bb R_+^N$, there exists $\lambda_0(x)>0$ such that for all $\lambda\in (0, \lambda_0(x))$, 
	
	\begin{equation*}
		w_{i, x, \lambda}\geq0
		\qquad
		\text{  } \Sigma_{x,\lambda} \; 
		\text{ for all } i\in J. 
	\end{equation*}
\end{prop}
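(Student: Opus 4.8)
\emph{The plan is to exploit the decay of the Kelvin transform near $y = x$ combined with a strong-maximum-principle/Hopf-lemma argument to force $w_{i,x,\lambda} \geq 0$ on $\Sigma_{x,\lambda}$ for all small $\lambda$.} By translation invariance of $\bb R_+^N$ it suffices to treat $x = 0$ and use the notation \eqref{eq:x=0-Notation}; write $\lambda_0 = \lambda_0(0)$. First I would record the two basic facts that drive the argument: (i) since each $u_i$ is positive and continuous on $\overline{\bb R_+^N}$, there are constants $0 < m_0 \leq M_0$ with $m_0 \leq u_i \leq M_0$ on $\overline{B_{R_0}^+}$ for a fixed radius $R_0$ (say $R_0 = 1$), and near $y = 0$ one has $u_i(y) = O(1)$ while $u_{i,\lambda}(y) = (\lambda/|y|)^{N-2} u_i(\lambda^2 y/|y|^2) \to 0$ locally uniformly on compact subsets of $\overline{\bb R_+^N}\setminus\{0\}$ as $\lambda \to 0^+$; (ii) on the sphere $\bdy B_\lambda \cap \overline{\bb R_+^N}$ we have $w_{i,\lambda} = 0$ by \eqref{eq:wi-Vanish-on-Blambda-boundary}. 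So on the \emph{boundary} $\bdy\Sigma_\lambda$ the functions $w_{i,\lambda}$ vanish on the spherical part, and on the flat part they will be made nonnegative by choosing $\lambda$ small.

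\emph{Next I would split $\Sigma_\lambda$ into an inner annular region and an outer region and handle each separately.} For the \textbf{outer region}, fix $R_0 = 1$ and consider $\Sigma_\lambda \cap \{|y| \geq 1\}$. For $|y| \geq 1 > \lambda$, the point $\lambda^2 y/|y|^2$ lies in $\overline{B_\lambda^+}$, so $u_i(\lambda^2 y/|y|^2) \leq M_0$ and hence
\begin{equation*}
	u_{i,\lambda}(y) \leq \left(\frac{\lambda}{|y|}\right)^{N-2} M_0 \leq \lambda^{N-2} M_0,
\end{equation*}
which can be made smaller than $m_0 \leq u_i(y)$ uniformly on $\{|y|\geq 1\}$ once $\lambda^{N-2} < m_0/M_0$; but one must also control $u_i(y)$ from below for \emph{all} $|y| \geq 1$, not just near the origin. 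This is where I expect the main subtlety: $u_i$ need not be bounded below by a positive constant on all of $\bb R_+^N$. The standard device (as in \cite{LiZhang2003}) is to use the superharmonicity of $u_i$ in $\bb R_+^N$ together with the sign of its normal derivative: one shows $u_i(y) \geq C|y|^{2-N}$ for $|y|$ large by comparing with a harmonic function that is zero on $\bdy\bb R_+^N$ outside a large ball and using the boundary condition (for indices with $c_i < 0$ the Neumann data is of the ``good'' sign or is controlled by the third hypothesis in \eqref{eq:Matrix-B-Assumptions}). Given the lower bound $u_i(y) \geq C|y|^{2-N}$, the comparison $u_{i,\lambda}(y) \leq \lambda^{N-2}M_0 |y|^{2-N}$ against $u_i(y) \geq C|y|^{2-N}$ again yields $w_{i,\lambda} \geq 0$ on $\{|y|\geq 1\}$ for $\lambda$ small. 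For the \textbf{inner region} $\Sigma_\lambda \cap \{\lambda < |y| < 1\}$, I would argue by the maximum principle: suppose $w_{i,\lambda}$ is negative somewhere in the closed inner region for some index $i$; since $w_{i,\lambda} = 0$ on $\{|y|=\lambda\}$ and (by the outer estimate) $w_{i,\lambda}\geq 0$ on $\{|y|=1\}$, a negative minimum over the $w$'s is attained either at an interior point of the inner region or on the flat boundary $\{y_N = 0, \lambda < |y| < 1\}$.

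\emph{Finally I would close the argument via the system maximum principle.} Set $W(y) = \min_{i\in J} w_{i,\lambda}(y)$ and suppose $W$ attains a negative minimum at some $\bar y \in \overline{\Sigma_\lambda \cap \{|y|<1\}}$, realized by index $i$. Using the mean value theorem, $\prod_j u_j^{a_{ij}} - \prod_j u_{j,\lambda}^{a_{ij}} = \sum_j a_{ij}\xi_j^{a_{ij}-1}(\cdots)\, w_{j,\lambda}$ with nonnegative coefficients (here the positivity of all $u$'s and the nonnegativity of the $a_{ij}$ are used), so the right-hand side of the interior equation in \eqref{eq:wixlambda-Equations} is a nonnegative linear combination of the $w_{j,\lambda}$; at a negative minimum of $W$ all these are $\leq W(\bar y) < 0$ at $\bar y$, hence $-\lap w_{i,\lambda}(\bar y) \leq 0$, i.e.\ $\lap w_{i,\lambda}(\bar y) \geq 0$, contradicting the strong maximum principle if $\bar y$ is interior. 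If instead $\bar y$ lies on the flat boundary, the Hopf lemma gives $\partial w_{i,\lambda}/\partial y_N(\bar y) < 0$ (outer normal pointing into $\{y_N<0\}$), while the boundary equation in \eqref{eq:wixlambda-Equations} writes $\partial w_{i,\lambda}/\partial y_N(\bar y)$ as $c_i$ times a similar nonnegative-combination expression; by the third condition in \eqref{eq:Matrix-B-Assumptions} the relevant indices have $c_i < 0$ there (the case $c_i\geq 0$ reduces to a single power $u_i^{N/(N-2)}$ and is handled directly), and one checks the sign is incompatible. Therefore no negative minimum exists, so $w_{i,\lambda}\geq 0$ on $\Sigma_\lambda$ for all $i$ and all $\lambda < \lambda_0$. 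I expect the \textbf{hardest step} to be the rigorous lower bound $u_i(y)\gtrsim |y|^{2-N}$ at infinity in the presence of the boundary nonlinearity; everything else is a by-now-standard moving-sphere maximum-principle bookkeeping, but that bookkeeping must be done carefully for the coupled system and at the flat boundary.
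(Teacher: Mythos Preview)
Your outer-region argument (comparing $u_{i,\lambda}(y)\le \lambda^{N-2}M_0|y|^{2-N}$ with a lower bound $u_i(y)\ge c_0|y|^{2-N}$) is essentially what the paper does, and you correctly flag the decay bound $u_i(y)\gtrsim|y|^{2-N}$ as the substantive step; the paper proves this in two separate lemmas according to the sign of $c_i$.

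The genuine gap is in your treatment of the \emph{inner} region $\{\lambda<|y|<1\}$ via the system maximum principle. Two things go wrong. First, a sign: at a point $\bar y$ realizing $W(\bar y)=\min_j w_{j,\lambda}(\bar y)=w_{i,\lambda}(\bar y)$ you have $w_{j,\lambda}(\bar y)\ge W(\bar y)$ for all $j$, not $\le$, so you cannot conclude that $\sum_j V_{ij}(\bar y)\,w_{j,\lambda}(\bar y)$ is negative. Second, and more seriously, even if every $w_{j,\lambda}(\bar y)<0$ the equation $-\Delta w_{i,\lambda}=\sum_j V_{ij}w_{j,\lambda}$ with $V_{ij}\ge 0$ only yields $\Delta w_{i,\lambda}(\bar y)\ge 0$, which is \emph{consistent} with $\bar y$ being an interior minimum rather than contradicting it. The zeroth-order coefficients $V_{ij}$ have the wrong sign for the strong maximum principle to exclude a negative interior minimum on a domain that is not narrow. (Your Hopf step at the flat boundary has a related sign slip: the outward normal to $\bb R_+^N$ is $-e_N$, so at a boundary minimum Hopf gives $\partial w_{i,\lambda}/\partial y_N(\bar y)>0$, not $<0$.)

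The paper avoids this entirely by a direct monotonicity computation near the center: setting $g_i(r,\theta)=r^{(N-2)/2}u_i(r,\theta)$, one checks from the $C^1$ bound that $\partial_r g_i>0$ on $\overline{B_{r_0}^+}$ for some small $r_0$ depending only on $\min_j\min_{\overline{B_1^+}}u_j$ and $\max_j\|Du_j\|_{C^0(\overline{B_1^+})}$. Since
\[
w_{i,\lambda}(y)=|y|^{(2-N)/2}\Bigl(g_i(|y|,\theta)-g_i(\lambda^2/|y|,\theta)\Bigr),
\]
this immediately gives $w_{i,\lambda}>0$ on $\overline{B_{r_0}^+}\setminus\overline{B_\lambda}$ for every $\lambda\in(0,r_0)$, with no maximum principle needed on the inner annulus. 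Replacing your inner-region step by this monotonicity argument (and then gluing with your outer-region step for $\lambda_0$ small) gives a correct proof.
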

%
According to Proposition \ref{prop:Moving-Spheres-Can-Start}, for $x\in \bdy \bb R_+^N$, we may define

\begin{equation*}
	\overline \lambda(x)
		= 
		\sup\{\lambda>0: 
		w_{i, x,\mu}\geq 0 \;
		\text{ in } \Sigma_{x,\mu}\; 
		\text{ for all } \mu\in (0, \lambda)\; 
		\text{ and all } i\in J
		\}. 
\end{equation*}
For convenience, the proof of Proposition \ref{prop:Moving-Spheres-Can-Start} will only be given for $x = 0$ and the notation in \eqref{eq:x=0-Notation} will be used. The proof for general $x\in \bdy \bb R_+^N$ is similar to the proof for $x = 0$. We begin by establishing three lemmas. 
\begin{lemma}\label{lemma:r_0-Exists}
	There exists $r_0>0$ such that for all $i\in J$ and all $\lambda\in (0, r_0)$, 
	
	\begin{equation*}
		w_{i,\lambda}(y)>0
		\qquad
		\text{ for all }
		y\in \overline{B_{r_0}^+}\setminus \overline B_\lambda. 
	\end{equation*}
\end{lemma}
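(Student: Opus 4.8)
The plan is to prove Lemma~\ref{lemma:r_0-Exists} by a direct gradient estimate near the origin, exploiting that each $u_i$ is $C^1$ up to the boundary and strictly positive there. First I would fix a radius $r_0 > 0$ (to be shrunk finitely many times) small enough that on the compact set $\overline{B_{r_0}^+}$ we have uniform bounds $0 < m_0 \le u_i \le M_0$ and $|\Grad u_i| \le L_0$ for all $i \in J$, with the constants depending only on the solution and $N$. The key observation is that for $y \in \overline{B_{r_0}^+}\setminus \overline B_\lambda$ with $|y| = r$ we want to compare $u_i(y)$ with
\begin{equation*}
	u_{i,\lambda}(y) = \left(\frac{\lambda}{|y|}\right)^{N-2} u_i\!\left(\frac{\lambda^2 y}{|y|^2}\right),
\end{equation*}
and the point $\lambda^2 y/|y|^2$ lies inside $B_\lambda$, hence inside $B_{r_0}^+$ once $\lambda < r_0$, so the bounds above apply to it.

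Next I would set, for fixed $y$ with $|y| = r \in (\lambda, r_0)$, the single-variable function
\begin{equation*}
	\varphi_i(t) = t^{N-2}\, u_i\!\left(\frac{t^2 y}{r^2}\right), \qquad t \in [\lambda, r],
\end{equation*}
so that $\varphi_i(r) = r^{N-2} u_i(y)$ and $\varphi_i(\lambda) = r^{N-2} u_{i,\lambda}(y)$; thus $w_{i,\lambda}(y) > 0$ is equivalent to $\varphi_i(r) > \varphi_i(\lambda)$, and it suffices to show $\varphi_i$ is strictly increasing on $[0, r_0]$ (uniformly in the direction $y/r$). Differentiating,
\begin{equation*}
	\varphi_i'(t) = (N-2)\, t^{N-3}\, u_i\!\left(\tfrac{t^2 y}{r^2}\right) + t^{N-2}\cdot \tfrac{2t}{r^2}\, \Grad u_i\!\left(\tfrac{t^2 y}{r^2}\right)\cdot y,
\end{equation*}
and since $|y| = r$ the second term is bounded in absolute value by $2 t^{N-1} L_0 / r \le 2 t^{N-1} L_0 / t = 2 L_0 t^{N-2}$ (using $t \le r$), while the first term is at least $(N-2) m_0 t^{N-3}$. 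Hence
\begin{equation*}
	\varphi_i'(t) \ge t^{N-3}\big( (N-2) m_0 - 2 L_0 t \big),
\end{equation*}
which is strictly positive once $t < (N-2) m_0 / (2 L_0)$. So choosing $r_0 \le \tfrac{1}{2}(N-2) m_0 / (2 L_0)$ (and still inside the region where the $C^1$ bounds hold) forces $\varphi_i' > 0$ on $(0, r_0]$, giving $\varphi_i(r) > \varphi_i(\lambda)$ for all $0 < \lambda < r < r_0$, and thus $w_{i,\lambda}(y) > 0$ on $\overline{B_{r_0}^+}\setminus \overline B_\lambda$ for every $i \in J$ and every $\lambda \in (0, r_0)$.

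I expect the only real subtlety — rather than an obstacle — to be bookkeeping: one must ensure the bounds $m_0, M_0, L_0$ are genuinely uniform over $\overline{B_{r_0}^+}$ (this is immediate from $u_i \in C^1(\overline{\bb R_+^N})$ and $u_i > 0$ on the compact set $\overline{B_{r_0}^+}$, by continuity and compactness), and to track carefully that the "inverted" point $\lambda^2 y/|y|^2$ stays in the region where these bounds are valid, which holds precisely because $|\lambda^2 y/|y|^2| = \lambda^2/|y| < \lambda < r_0$. One should also note that the argument is independent of the PDE system itself — it uses only positivity and $C^1$-regularity up to the boundary — so the boundary condition plays no role here; the system equations enter only later, in Proposition~\ref{prop:Moving-Spheres-Can-Start}. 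Finally, since the estimate is uniform in the unit direction $y/|y|$, no extra care is needed at points of $\bdy B_{r_0}^+ \cap \bdy \bb R_+^N$.
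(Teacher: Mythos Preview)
Your proof is correct and follows essentially the same approach as the paper. The paper packages the argument via the single function $g_i(r,\theta)=r^{(N-2)/2}u_i(r,\theta)$ and shows $\partial g_i/\partial r>0$ on $(0,r_0]$, then observes $w_{i,\lambda}(y)=\abs{y}^{(2-N)/2}\big(g_i(\abs{y},\theta)-g_i(\lambda^2/\abs{y},\theta)\big)$; your $\varphi_i(t)=t^{N-2}u_i(t^2y/r^2)$ is simply $r^{(N-2)/2}g_i(t^2/r,\theta)$, a monotone reparametrization of the same radial function, so your monotonicity computation and the paper's are equivalent up to a change of variables.
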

	\begin{proof}
		For $(r, \theta)\in [0,\infty)\times \overline{\bb S_+^{N-1}}$ and $i\in J$ set $g_i(r, \theta) = r^{(N-2)/2}u_i(r, \theta)$, where $\overline{\bb S_+^{N-1}}$ is the closed, $(N -1)$-dimensional upper half sphere. Set 
	
	\begin{equation*}
		r_0
			= 
			\min
			\left\{
				1, 
				\frac{N - 2}{4} \left(\min_{j\in J}\min_{\overline{B_1^+}}u_j\right)
				\left( \max_{j\in J} \norm{Du_j}_{C^0(\overline{B_1^+})}\right)^{-1}
			\right\}. 
	\end{equation*}
	For all $0<r\leq r_0$ and for all $i\in J$, we have
	
	\begin{equation*}
		\frac{\partial g_i}{\partial r}(r, \theta)
			\geq 
			r^{(N - 4)/2}
			\left(
				\frac{N - 2}{2} \min_{\overline{B_1^+}}u_i 
				- r \norm{Du_i}_{C^0(\overline{B_1^+})}
			\right)
			>
			0. 
	\end{equation*}
	In particular, if $0<\lambda \leq r_0$ then with $\theta = y/\abs y$, 
	
	\begin{equation*}
		w_{i,\lambda}(y)
			=  
			\abs y^{(2-N)/2} \left(g_i(\abs y, \theta) - g_i\left(\frac{\lambda^2}{\abs y}, \theta\right)\right)
			>
			0
		\qquad 
		\text{ for all } y\in \overline{B_{r_0}^+}\setminus \overline B_\lambda\;
		\text{ and all } i \in J. 
	\end{equation*}

	\end{proof}
%
\begin{lemma}\label{lemma:Harmonic-at-infty-ci-negative}
	If $i$ is an index for which $c_i<0$, then $\liminf_{\abs y\to \infty} \abs y^{N - 2}u_i(y)>0$. 
\end{lemma}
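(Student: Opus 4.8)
The plan is to use only two features of the system: the interior source term $\prod_{j=1}^m u_j^{a_{ij}}$ is nonnegative, so $u_i$ is superharmonic in $\bb R_+^N$; and $c_i<0$, so the inward normal derivative $\partial u_i/\partial y_N = c_i\prod_{j=1}^m u_j^{b_{ij}}$ is strictly negative on $\bdy\bb R_+^N$ (here $b_{ij}\ge 0$ and $u_j>0$ on $\overline{\bb R_+^N}$ are used). Together these say that the even reflection of $u_i$ across $\bdy\bb R_+^N$ is a positive superharmonic function on all of $\bb R^N$, and the key point is that such a function cannot decay faster than the fundamental solution $\abs{y}^{2-N}$.

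First I would make the reflection precise. Set $\tilde u_i(y',y_N)=u_i(y',\abs{y_N})$ for $(y',y_N)\in\bb R^N$. Since $u_i\in C^2(\bb R_+^N)\cap C^1(\overline{\bb R_+^N})$, the function $\tilde u_i$ is locally Lipschitz on $\bb R^N$ and of class $C^2$ off $\bdy\bb R_+^N$; integrating by parts twice against a nonnegative test function and collecting the boundary terms contributed by the two half-spaces shows that, in the sense of distributions, $\lap\tilde u_i$ is the sum of the even reflection of $\lap u_i$ (which is $\le 0$) and a surface measure supported on $\bdy\bb R_+^N$ with density $2c_i\prod_{j=1}^m u_j^{b_{ij}}$ (which is $<0$). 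Hence $\lap\tilde u_i\le 0$ on $\bb R^N$, i.e. $\tilde u_i$ is a positive superharmonic function on $\bb R^N$.

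Next I would compare $\tilde u_i$ with a fundamental-solution barrier on an exhausting family of annuli. Fix $R>0$ and put $m_0=\min_{\{\abs{y}=R\}\cap\overline{\bb R_+^N}}u_i$, which is positive by positivity and continuity of $u_i$; then $\min_{\bdy B_R}\tilde u_i=m_0$. For $\rho>R$, the function $\tilde u_i(y)-m_0(R/\abs{y})^{N-2}$ is continuous on $\overline{B_\rho}\setminus B_R$, superharmonic in $B_\rho\setminus\overline{B_R}$ (the subtracted term is harmonic there), nonnegative on $\bdy B_R$, and $\ge -m_0(R/\rho)^{N-2}$ on $\bdy B_\rho$ since $\tilde u_i>0$. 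By the minimum principle for superharmonic functions it is $\ge -m_0(R/\rho)^{N-2}$ throughout $B_\rho\setminus\overline{B_R}$; fixing $y$ with $\abs{y}>R$ and letting $\rho\to\infty$ gives $\tilde u_i(y)\ge m_0(R/\abs{y})^{N-2}$. Restricting to $y\in\bb R_+^N$ with $\abs{y}>R$ yields $\abs{y}^{N-2}u_i(y)\ge m_0 R^{N-2}$, whence $\liminf_{\abs{y}\to\infty}\abs{y}^{N-2}u_i(y)\ge m_0 R^{N-2}>0$.

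I expect the only delicate point to be the distributional identity for $\lap\tilde u_i$ --- that is, verifying that the even reflection is genuinely superharmonic across $\bdy\bb R_+^N$. This is exactly where $c_i<0$ is used: the jump of the normal derivative of $\tilde u_i$ across $\bdy\bb R_+^N$ equals $2c_i\prod_{j=1}^m u_j^{b_{ij}}$, which must be nonpositive. If one prefers to stay in the half-space, the comparison in the previous paragraph can instead be run on the half-annuli $B_\rho^+\setminus\overline{B_R^+}$ using the maximum principle for the mixed Dirichlet--Neumann problem (the inward normal derivative of the barrier vanishes on $\bdy\bb R_+^N$ while that of $u_i$ is negative), the only nuisance then being Hopf's lemma at the edges where $\{\abs{y}=R\}$ and $\{\abs{y}=\rho\}$ meet $\bdy\bb R_+^N$, which the reflection argument avoids.
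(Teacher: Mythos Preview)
Your argument is correct. The even reflection does produce a distributional superharmonic function on all of $\bb R^N$ (the jump term has the right sign precisely because $c_i<0$), and the annulus comparison with $m_0(R/\abs{y})^{N-2}$ then gives the decay bound cleanly.

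The paper takes a different but closely related route: instead of reflecting, it pulls the exterior region $\overline{\bb R_+^N}\setminus B_R$ back to $\overline{B_R^+}\setminus\{0\}$ via the Kelvin transform $u_{i,R}$, and then applies a half-ball maximum principle (Lemma~\ref{lemma:Boundary-Maximum-Principle}) that handles the Neumann-type boundary condition directly. That lemma is proved by comparing with the barrier $m_R\frac{\epsilon^{2-N}-\abs{y}^{2-N}}{\epsilon^{2-N}-R^{2-N}}$ on half-annuli $B_R^+\setminus B_\epsilon^+$ and letting $\epsilon\to 0$; the sign condition $\partial v/\partial y_N<0$ rules out a boundary minimum exactly as your reflection does. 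So the two proofs are essentially dual: you reflect to kill the boundary and work on full annuli going out to infinity, while the paper Kelvin-inverts to a bounded half-ball and works on half-annuli shrinking to the origin. Your approach avoids the Kelvin machinery and the separate appendix lemma, at the cost of the distributional computation for $\lap\tilde u_i$; the paper's approach integrates more seamlessly with the moving-spheres framework already in place, since $u_{i,R}$ is just $u_{i,0,R}$ in the notation of \eqref{eq:x=0-Notation}.
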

\begin{proof}
If $c_i\geq 0$ for all $i\in J$, there is nothing to prove. Otherwise, fix $R>0$ and fix $i\in J$ for which $c_i<0$. By \eqref{eq:Inverted-Main-Equations} the hypotheses of Lemma \ref{lemma:Boundary-Maximum-Principle} are satisfied by $u_{i,R}$. Therefore, for each $z\in \overline{B_R^+}\setminus\{0\}$
	\begin{equation*}
		\left(\frac{R}{\abs z}\right)^{N - 2} u_i\left(\frac{R^2 z}{\abs z^2}\right)
			= 
			u_{i,R}(z)
			\geq
			\min_{\bdy B_R \cap\overline{ \bb R_+^N}} u_{i,R}
			=
			\min_{\bdy B_R \cap \overline{\bb R_+^N}} u_i. 
	\end{equation*}
	%
	%
	Now, if $y\in \overline{\bb R_+^N}\setminus B_R$, set $z = R^2 y/\abs y^2$. Then $z\in \overline{B_R^+}\setminus \{0\}$, $y = R^2 z /\abs z^2$, and the above inequalities give
	
	\begin{equation*}
		u_i(y)
			\geq
			\left(\min_{\bdy B_R \cap\overline{ \bb R_+^N}} u_i \right)R^{N - 2}\abs y^{2-N}. 
	\end{equation*}
	Lemma \ref{lemma:Harmonic-at-infty-ci-negative} follows immediately. 
\end{proof}
\begin{lemma}\label{lemma:SuperHarmonic-at-infty-ci-nonnegative}
	If $i$ is an index for which $c_i\geq 0$, then $\liminf_{\abs y\to \infty} \abs y^{N - 2}u_i(y)>0$. 
\end{lemma}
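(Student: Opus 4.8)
The plan is to invert about the origin so that ``infinity'' becomes a boundary point, and then to run a Hopf-type argument that exploits the degeneracy of the boundary nonlinearity at a zero of the solution.

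First I would pass to $v:=u_{i,1}$ (the Kelvin inversion of $u_i$ at $0$ with $\lambda=1$). By \eqref{eq:Inverted-Main-Equations}, together with the third item of \eqref{eq:Matrix-B-Assumptions} (which, since $c_i\ge0$, makes the boundary equation for the $i$-th component scalar), $v$ is positive on $\overline{\bb R_+^N}\setminus\{0\}$, is of class $C^2(\bb R_+^N)\cap C^1(\overline{\bb R_+^N}\setminus\{0\})$, satisfies $-\lap v=\prod_j u_{j,1}^{a_{ij}}>0$ in $\bb R_+^N$, and satisfies $\partial v/\partial y_N=c_i v^{N/(N-2)}\ge0$ on $\bdy\bb R_+^N\setminus\{0\}$; in particular $v$ is a positive superharmonic function in $\bb R_+^N$. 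Since $y\mapsto y/\abs y^2$ is an involution of $\overline{\bb R_+^N}\setminus\{0\}$ and $\abs y^{N-2}u_i(y)=v(y/\abs y^2)$, one has $\liminf_{\abs y\to\infty}\abs y^{N-2}u_i(y)=\liminf_{z\to0}v(z)$, so the lemma is equivalent to $\liminf_{z\to0}v(z)>0$.

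Now set $\iota:=\inf_{\overline{B_1^+}\setminus\{0\}}v\ge0$. If this infimum is attained at some $z_*\ne0$ then $\iota=v(z_*)>0$ and we are done; otherwise every minimizing sequence must converge to $0$, so $\iota=\liminf_{z\to0}v$, and it remains only to exclude $\iota=0$. Assume $\iota=0$. Even-reflecting $v$ across $\{y_N=0\}$ produces a function whose distributional Laplacian differs from a superharmonic one only by the \emph{non-negative} measure $2c_iv^{N/(N-2)}$ supported on $\{y_N=0\}$; adding a suitable Newtonian potential of a localization of this measure — a bounded, non-negative, continuous function — yields a positive superharmonic function on a punctured ball $B_\rho\setminus\{0\}$. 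A Bôcher-type dichotomy for such functions then forces: either $v\to+\infty$ as $z\to0$, contradicting $\iota=0$; or $v$ is bounded near $0$. In the bounded case the Neumann datum $c_iv^{N/(N-2)}$ is bounded, an isolated boundary point is removable for bounded solutions of the linear Neumann problem, and elliptic regularity up to the flat boundary gives $v\in C^1$ near $0$; continuity then forces $v(0)=\iota=0$, hence $\partial v/\partial y_N(0)=c_i v(0)^{N/(N-2)}=0$. But $v$ is superharmonic in $\bb R_+^N$ and attains its minimum over a half-ball about $0$ at the boundary point $0$, with $v>v(0)$ in the interior, so Hopf's boundary-point lemma (the half-space satisfies the interior-ball condition) gives $\partial v/\partial y_N(0)>0$, a contradiction. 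Therefore $\iota>0$, which is the assertion.

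I expect the main obstacle to be exactly the step showing that $v$ is bounded near the origin. One cannot apply Hopf's lemma, nor even a minimum principle, to $v$ directly on a half-ball, because for $c_i\ge0$ the (nonlinear) Neumann condition has precisely the sign that is compatible with a boundary minimum; the reflection-plus-single-layer-potential device serves only to recover a genuinely superharmonic function to which classical potential theory applies, thereby pinning down the boundary behaviour of $v$ at $0$ and making the Hopf argument legitimate. (In the special case $c_i=0$ the reflected function is already superharmonic and no correction is needed; in fact $u_i$ itself then extends by even reflection to an entire positive superharmonic function with nontrivial Riesz mass, and $\liminf_{\abs y\to\infty}\abs y^{N-2}u_i(y)>0$ follows by comparison with the Newtonian potential of the interior nonlinearity over a unit ball.)
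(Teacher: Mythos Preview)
Your approach is genuinely different from the paper's, which never inverts: it works directly with $u_i$ at infinity, restricts attention to the set $\mathcal O_i=\{y:u_i(y)<|y|^{2-N}\}$ (where the boundary nonlinearity linearizes as $\partial_{y_N}u_i\le C_1|y|^{-2}u_i$), and compares $u_i$ with the explicit subsolution $\xi(y)=|y-Ae_N|^{2-N}+|y|^{1-N}$, whose shifted fundamental-solution part produces a strictly positive normal derivative on $\partial\bb R_+^N$ and whose $|y|^{1-N}$ part makes it strictly subharmonic. A maximum-principle argument on $\mathcal O_i\setminus B_{2A}$ then gives $u_i\ge\epsilon\xi$, and the conclusion follows since $|y|^{N-2}\xi(y)\to1$.

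Your route has a real gap in the $c_i>0$ case, precisely at the step you flag as the main obstacle. To apply a B\^ocher-type dichotomy you need $\tilde v+N_\mu$ to be superharmonic on the \emph{full} punctured ball, so the correction must cancel the surface measure $\mu=2c_i v^{N/(N-2)}\mathcal H^{N-1}\!\restriction_{\{y_N=0\}}$ all the way down to the origin. But you have no a priori control on $v$ near $0$: the density $v^{N/(N-2)}$ need not be locally integrable on $\partial\bb R_+^N$ near $0$, so $\mu$ need not even be a Radon measure there, and even if it is, its single-layer potential need not be bounded. ``Localizing'' away from $0$ only makes $\tilde v+N_\mu$ superharmonic away from $0$, which is useless for B\^ocher; localizing to a neighbourhood of $0$ requires exactly the boundedness you are trying to prove. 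This is a genuine circularity, not a technicality. There is a second, smaller issue: even granting boundedness of $v$, your removable-singularity/$C^1$-regularity step needs the interior right-hand side $\prod_j u_{j,1}^{a_{ij}}$ bounded near $0$, which would require $u_j(y)=O(|y|^{2-N})$ for \emph{all} $j$, not just $j=i$; nothing in your argument provides this. Your parenthetical treatment of the case $c_i=0$ (even reflection of $u_i$ gives an entire positive superharmonic function, then compare with the Newtonian potential of the interior source on a ball) is correct, but for $c_i>0$ the argument as written does not close.
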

\begin{proof}
If $c_i<0$ for all $i\in J$ there is nothing to prove. Otherwise, fix an index $i$ for which $c_i\geq0$ and let 
	
	\begin{equation*}
		\mathcal O_i
			=
			\{
			y\in \bb R_+^N: u_i(y) <\abs y^{2-N}	
			\}. 
	\end{equation*}
	Clearly, to prove Lemma \ref{lemma:SuperHarmonic-at-infty-ci-nonnegative} it suffices to show  $\liminf_{\abs y\to \infty\; ; y\in \overline {\mathcal O}_i} \abs y^{N - 2}u_i(y)>0$. For $y\in \overline{\mathcal O_i}$ we have $u_i(y)^{N/(N - 2)} \leq \abs y^{-2}u_i(y)$, so $u_i$ satisfies 
	
	\begin{equation*}
		\begin{cases}
			-\lap u_i >0 
				& 
				\text{ in } \mathcal O_i
				\\
			\frac{\partial u_i}{\partial y_N} 
			- 
			C_1 \abs y^{-2} u_i
			< 
			0
				& \text{ on } \bdy \bb R_+^N \cap \overline{\mathcal O}_i 
		\end{cases}
	\end{equation*}
	for some constant $C_1 = C_1(\max_j\abs{c_j})>0$. For $A\gg1$ fixed and to be determined, define 
	
	\begin{equation}\label{eq:xi-Definition}
		\xi(y) 
			= 
			\abs{ y - Ae_N}^{2-N} + \abs y^{1 - N}
		\qquad
		\text{ for } \abs y \geq 2A. 
	\end{equation}
	By direct computation, one may verify that $\xi$ satisfies
	
	\begin{equation}\label{eq:xi-Equations}
		\begin{cases}
			\lap \xi >0
				&
				\text{ in } \bb R_+^N\setminus \overline B_{2A}
				\\
			\abs y^{-2}\xi(y) \leq C\abs y^{-N}
				&
				\text{ in } \overline{\bb R_+^N}\setminus B_{2A}
				\\
			\frac{\partial \xi}{\partial y_N}(y)
				= 
				A(N -2)\abs{y - Ae_N}^{-N}
				&
				\text{ on }
				\bdy \bb R_+^N\setminus B_{2A}. 
		\end{cases}
	\end{equation}
	Therefore, we may choose $A = A(N, \max_j \abs{c_j})$ sufficiently large so that 
	
	\begin{equation*}
		\left(\frac{\partial}{\partial y_N} - C_1 \abs y^{-2}\right)\xi(y)> 0
		\qquad
		\text{ on }
		\bdy \bb R_+^N\setminus B_{2A}.
	\end{equation*}
	Fixing such an $A$ and choosing $\epsilon>0$ small enough to achieve $u_i(y)>\epsilon \xi(y)$ on $(\bdy B_{2A}\cap \overline{\bb R_+^N})\cup(\bdy \mathcal O_i \cap \bb R_+^N)$, we obtain
	
	\begin{equation}
	\label{eq:u-epsilonXi-Inequalities}
		\begin{cases}
			- \lap(u_i - \epsilon \xi)>0 
				&
				\text{ in } \mathcal O_i \setminus \overline B_{2A}
				\\
			\left(
				\frac{\partial}{\partial y_N} - C_1\abs y^{-2}
			\right)
			(u_i - \epsilon \xi)
			<0
				& 
				\text{ on } (\bdy \bb R_+^N\cap \overline{\mathcal O}_i)\setminus B_{2A}
				\\
			(u_i - \epsilon \xi)(y) \geq 0
				& 
				\text{ on } 
				(\bdy B_{2A}\cap \overline{\bb R_+^N})
				\cup
				(\bdy \mathcal O_i \cap \bb R_+^N). 
		\end{cases}
	\end{equation}
	Moreover, $\liminf_{\abs y\to\infty}(u_i -\epsilon \xi)\geq0$, so if $u_i - \epsilon \xi$ is negative at some point of $\overline{\mathcal O}_i\setminus B_{2A}$, then $u_i - \epsilon \xi$ must achieve a negative minimum at some point $\tilde y\in \overline {\mathcal O}_i \setminus B_{2A}$. By the maximum principle, we may assume $\tilde y \in \bdy(\mathcal O_i \setminus B_{2A})$. The third item of \eqref{eq:u-epsilonXi-Inequalities} imposes $\tilde y\in (\bdy \bb R_+^N\cap \overline{\mathcal O}_i)\setminus B_{2A}$. On the other hand, $(u_i - \epsilon\xi)(\tilde y)<0$ and $\frac{\partial}{\partial y_N}(u_i - \epsilon\xi)(\tilde y)\geq 0$, so the second item of \eqref{eq:u-epsilonXi-Inequalities} is violated. We conclude that $u_i-\epsilon \xi\geq 0$ in $\overline {\mathcal O}_i\setminus B_{2A}$. Consequently, 
	
	\begin{equation*}
		\liminf_{\abs y\to\infty\; ; y\in \overline{\mathcal O}_i}\abs y^{N - 2}u_i(y)
			\geq
			\epsilon \liminf_{\abs y\to\infty}\abs y^{N - 2}\xi(y)
			>
			0. 
	\end{equation*}
	Lemma \ref{lemma:SuperHarmonic-at-infty-ci-nonnegative} is established. 
\end{proof}
\begin{proof}[Proof of Proposition \ref{prop:Moving-Spheres-Can-Start}]
	Let $r_0$ be as in Lemma \ref{lemma:r_0-Exists}. By Lemmas \ref{lemma:Harmonic-at-infty-ci-negative} and \ref{lemma:SuperHarmonic-at-infty-ci-nonnegative} we may first choose $c_0\in (0,1]$ such that  
	
	\begin{equation*}
		u_i(y) \geq c_0 \abs y^{2-N}
		\qquad 
		\text{ for all }y\in \overline{\bb R_+^N} \setminus B_{r_0}
		\quad
		\text{ and all } i\in J
	\end{equation*}
	and then choose $\lambda_0\in (0, r_0)$ such that 
	
	\begin{equation*}
		\lambda_0^{N - 2}
		\left(
			\max_j \max_{\overline{B_{r_0}^+}}u_j
		\right)
		\leq 
		c_0. 
	\end{equation*}
	For such $\lambda_0$, if $0<\lambda\leq \lambda_0$ then
	
	\begin{equation*}
		u_{i, \lambda}(y)
			\leq
			\lambda_0^{N - 2}
				\left(\max_j \max_{\overline{B_{r_0}^+}}u_j\right) \abs y^{2-N}
			\leq
			c_0\abs y^{2-N}
			\leq 
			u_i(y)
		\qquad
		\text{ for all }y\in \overline{\bb R_+^N}\setminus B_{r_0}\;
		\text{ and all } i \in J. 
	\end{equation*}
Combining this with Lemma \ref{lemma:r_0-Exists} establishes Proposition \ref{prop:Moving-Spheres-Can-Start}. 
\end{proof}

\section{A Symmetry Relation for $u_1, \cdots, u_m$}
\label{section:AllCriticalWvanish}
%
%
In this section we prove the following proposition. 
\begin{prop}\label{prop:Critical-lambda-finite-Critical-w-vanish}
	For each $x\in \bdy \bb R_+^N$, $\overline\lambda(x)<\infty$ and 
	
	\begin{equation*}
		w_{i,x,\overline \lambda(x)}(y) \ident 0
		\qquad
		\text{ for all } 
		y\in \overline{\bb R_+^N}\setminus\{x\}
		\; 
		\text{ and all } 
		i\in J. 
	\end{equation*}
\end{prop}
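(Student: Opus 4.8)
The plan is to run the moving sphere argument in two stages, following the strategy of \cite{LiZhang2003}. First I would show $\overline\lambda(x)<\infty$ for every $x\in\bdy\bb R_+^N$. Suppose to the contrary that $\overline\lambda(x)=\infty$, so that $w_{i,x,\mu}\geq 0$ in $\Sigma_{x,\mu}$ for \emph{all} $\mu>0$ and all $i\in J$. Writing this out in terms of the Kelvin transform and letting $\mu\to\infty$ along suitable rays, one gets a lower bound forcing each $u_i$ to decay no faster than $\abs{y}^{2-N}$ from below but, more importantly, to be bounded below by a positive constant at infinity after rescaling; combined with the interior superharmonicity $-\lap u_i>0$ and the Harnack-type consequences, this leads to a contradiction with $u_i$ being defined (and positive, finite) on all of $\bb R_+^N$. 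Concretely, I would fix $e\in\bb S^{N-1}_+$ and examine $\liminf_{t\to\infty}t^{N-2}u_i(te)$: the inequality $u_i(y)\geq u_{i,x,\mu}(y)$ for all large $\mu$ implies $\liminf_{\abs y\to\infty}\abs y^{N-2}u_i(y)=\infty$, which is incompatible with $u_i$ being superharmonic and positive in the half-space with the given boundary condition (a Bôcher-type / Brezis--Lions argument bounds this liminf by a finite constant). This is the standard ``moving spheres cannot go to infinity'' step.

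Next, with $\overline\lambda:=\overline\lambda(x)<\infty$ in hand, I would show all the critical differences vanish identically. By continuity, $w_{i,x,\overline\lambda}\geq 0$ in $\Sigma_{x,\overline\lambda}$ for all $i$. Suppose some $w_{i,x,\overline\lambda}\not\equiv 0$. From \eqref{eq:wixlambda-Equations}, on $\Sigma_{x,\overline\lambda}$ the right-hand side $\prod_j u_j^{a_{ij}}-\prod_j u_{j,x,\overline\lambda}^{a_{ij}}$ can be written, using $w_{j,x,\overline\lambda}\geq0$ and the mean value theorem, as a nonnegative combination of the $w_{j,x,\overline\lambda}$'s with locally bounded coefficients; hence each $w_{i,x,\overline\lambda}$ is a supersolution of a cooperative linear system, and similarly the boundary condition is of the form $\partial_{y_N}w_{i,x,\overline\lambda}=c_i(\text{nonneg.\ combo of }w_{j,x,\overline\lambda})$ on the flat part of $\bdy\Sigma_{x,\overline\lambda}$. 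By the strong maximum principle and Hopf lemma applied componentwise (using irreducibility of $\mathcal A$ to propagate positivity across components — if one $w_j$ is $\equiv 0$ then the equations force the coupled ones to be harmonic, and irreducibility plus the boundary data rule this out unless all vanish), either every $w_{i,x,\overline\lambda}\equiv 0$, or every $w_{i,x,\overline\lambda}>0$ in the interior of $\Sigma_{x,\overline\lambda}$ with strictly negative outward normal derivative on $\bdy B_{\overline\lambda}(x)\cap\overline{\bb R_+^N}$ by Hopf.

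In the latter case I would derive a contradiction with maximality of $\overline\lambda$ by the usual perturbation argument: using the strict positivity of $w_{i,x,\overline\lambda}$ on compact subsets of $\overline{\Sigma_{x,\overline\lambda}}\setminus(\bdy B_{\overline\lambda}(x))$, the Hopf-lemma gradient information near $\bdy B_{\overline\lambda}(x)$, and the uniform decay estimates $u_i(y)\sim\abs y^{2-N}$, $u_{i,x,\lambda}(y)\lesssim\abs y^{2-N}$ at infinity (which hold uniformly for $\lambda$ in a compact range by Lemmas \ref{lemma:Harmonic-at-infty-ci-negative} and \ref{lemma:SuperHarmonic-at-infty-ci-nonnegative}), one shows $w_{i,x,\lambda}\geq 0$ in $\Sigma_{x,\lambda}$ still holds for $\lambda=\overline\lambda+\varepsilon$ with $\varepsilon>0$ small, contradicting the definition of $\overline\lambda(x)$. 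Hence $w_{i,x,\overline\lambda(x)}\equiv 0$ for all $i$.

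The main obstacle I expect is handling the coupling between components correctly in the strong-maximum-principle / Hopf step: because the system is only \emph{cooperative} (not decoupled), one must argue that the subset of indices $i$ with $w_{i,x,\overline\lambda}\equiv 0$ is either empty or all of $J$, which is exactly where irreducibility of $\mathcal A$ is used — if $I_1=\{i:w_{i,x,\overline\lambda}\equiv 0\}$ were a proper nonempty subset, then for $i\in I_1$ the equation $-\lap w_{i,x,\overline\lambda}=\prod u_j^{a_{ij}}-\prod u_{j,x,\overline\lambda}^{a_{ij}}\geq 0$ with $w_{i,x,\overline\lambda}\equiv0$ forces $a_{ij}=0$ whenever $w_{j,x,\overline\lambda}\not\equiv0$, i.e.\ $a_{ij}=0$ for all $i\in I_1$, $j\in J\setminus I_1$, contradicting irreducibility. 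A secondary technical point is making the decay estimates at infinity uniform in $\lambda$ so the perturbation step goes through; this should follow from the explicit form of the Kelvin transform together with the already-established lower bounds on $u_i$.
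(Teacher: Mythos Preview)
Your outline for the second half (showing $w_{i,x,\overline\lambda}\equiv 0$ once $\overline\lambda(x)<\infty$) matches the paper's approach: strong maximum principle in the interior, irreducibility of $\mathcal A$ to force all-or-nothing vanishing, Hopf on $\partial B_{\overline\lambda}$, and a perturbation contradicting maximality. Two points you pass over need more work than you suggest, though. First, strict positivity of $w_{i,\overline\lambda}$ on the flat boundary $\partial\Sigma_{\overline\lambda}\setminus\partial B_{\overline\lambda}$ relies essentially on the third hypothesis in \eqref{eq:Matrix-B-Assumptions} when $c_i\geq 0$ (so that a touching point forces $\partial_{y_N}w_{i,\overline\lambda}=0$, contradicting Hopf). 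Second, the perturbation step needs a strict lower bound $w_{i,\overline\lambda}(y)\geq c_0\abs{y}^{2-N}$ at infinity; this does \emph{not} follow from Lemmas~\ref{lemma:Harmonic-at-infty-ci-negative}--\ref{lemma:SuperHarmonic-at-infty-ci-nonnegative}, since $u_i$ and $u_{i,\overline\lambda}$ both behave like $\abs{y}^{2-N}$ and their difference could a priori be $o(\abs{y}^{2-N})$. The paper proves this separately (Claims~\ref{claim:Critical-w-Superhormonic-at-infty-ci-neg}--\ref{claim:Critical-w-Superhormonic-at-infty-ci-nonneg}) by comparison arguments applied to $w_{i,\overline\lambda}$ itself.

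Your argument for $\overline\lambda(x)<\infty$, however, has a genuine gap. You correctly deduce that $\overline\lambda(x)=\infty$ forces $\liminf_{\abs y\to\infty}\abs y^{N-2}u_i(y)=\infty$, but the claim that a ``B\^ocher-type / Brezis--Lions argument'' then bounds this liminf is false on the half-space: the function $u(y)=y_N+1$ is harmonic and positive on $\bb R_+^N$ with $\abs y^{N-2}u(y)\to\infty$ along $y=te_N$, so superharmonicity alone yields no such bound, and the nonlinear boundary condition does not obviously help. The paper's route is different and essential. It first shows (Lemma~\ref{lemma:If-one-lambda-unbounded-all-lambda-unbounded}, which uses the already-established vanishing at finite critical radii) that $\overline\lambda(x_0)=\infty$ at one point forces $\overline\lambda(x)=\infty$ at \emph{every} $x\in\partial\bb R_+^N$; this global information feeds into Lemma~\ref{lemma:t-Dependence-Only} to conclude each $u_i$ depends only on $y_N$, reducing \eqref{eq:Main-Equations} to the ODE system \eqref{eq:Main-Equations-ODE}, whose positive global solutions are then ruled out by an elementary monotonicity/integrability argument. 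You need this two-step structure (or a genuine replacement for it), not a direct decay bound.
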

For convenience Proposition \ref{prop:Critical-lambda-finite-Critical-w-vanish} will be proven for $x = 0$ only. Proposition \ref{prop:Critical-lambda-finite-Critical-w-vanish} will be established with the aid of some lemmas. 
\begin{lemma}\label{lemma:Some-wi-vanish-All-wi-vanish}
	Let $\mathcal A$ be a matrix satisfying \eqref{eq:Matrix-A-Assumptions} and let $x_0\in \bdy \bb R_+^N$. For $\lambda\in (0, \overline \lambda(x_0)]$, if there exists $i_0\in J$ for which $w_{i_0, x_0, \lambda}\ident 0$ in $\Sigma_{x_0,  \lambda}$, then 
	
	\begin{equation}\label{eq:A-Irreducible-Consequence}
		w_{i, x_0, \lambda} \ident 0 
		\qquad \text{ in } \overline{\bb R_+^N} \setminus\{x_0\}\;
		\text{ for all }i\in J. 
	\end{equation}
\end{lemma}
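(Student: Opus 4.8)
The plan is to exploit the irreducibility of $\mathcal{A}$ together with the structure of the equations satisfied by the differences $w_{i,x_0,\lambda}$, propagating the vanishing of one component to all components. Throughout, fix $\lambda \in (0,\overline\lambda(x_0)]$ and write simply $w_i = w_{i,x_0,\lambda}$, $u_{i,\lambda} = u_{i,x_0,\lambda}$, and $\Sigma = \Sigma_{x_0,\lambda}$. By the definition of $\overline\lambda(x_0)$ and continuity, we have $w_i \geq 0$ in $\Sigma$ for every $i \in J$.

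First I would examine the interior equation for $w_{i_0}$. Since $w_{i_0} \equiv 0$ in $\Sigma$, equation \eqref{eq:wixlambda-Equations} forces
\begin{equation*}
	\prod_{j=1}^m u_j^{a_{i_0 j}} - \prod_{j=1}^m u_{j,\lambda}^{a_{i_0 j}} \equiv 0
	\qquad \text{ in } \Sigma.
\end{equation*}
Because each $u_j \geq u_{j,\lambda} > 0$ on $\Sigma$ (that is, $w_j \geq 0$), the only way a product of nonnegative powers can coincide is if $u_j = u_{j,\lambda}$ on $\Sigma$ for every index $j$ with $a_{i_0 j} > 0$; more precisely, if $a_{i_0 j} > 0$ then $w_j \equiv 0$ on the set where $u_j > u_{j,\lambda}$ would otherwise contribute, but since the factors are continuous and the product difference vanishes identically while all factors are ordered, a short argument (taking logarithms on the open set where all $u_j > 0$, which is all of $\Sigma$) shows $w_j \equiv 0$ in $\Sigma$ for all $j$ with $a_{i_0 j} > 0$. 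Let $I_1 = \{ i \in J : w_i \equiv 0 \text{ in } \Sigma \}$; we have just shown that $i_0 \in I_1$ and that $a_{i_0 j} > 0 \implies j \in I_1$.

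Next I would iterate. If $i \in I_1$, then $w_i \equiv 0$ in $\Sigma$, so the same argument applied to the interior equation for $w_i$ shows $a_{ij} > 0 \implies j \in I_1$. Hence $I_1$ has the property that $a_{ij} = 0$ whenever $i \in I_1$ and $j \notin I_1$. If $I_1 \neq J$, then setting $I_2 = J \setminus I_1$ gives a partition $J = I_1 \cup I_2$ with $a_{ij} = 0$ for all $i \in I_1$, $j \in I_2$, contradicting the irreducibility of $\mathcal{A}$. Therefore $I_1 = J$, i.e. $w_i \equiv 0$ in $\Sigma$ for all $i \in J$.

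Finally I would extend the vanishing from $\Sigma = \Sigma_{x_0,\lambda}$ to all of $\overline{\bb R_+^N} \setminus \{x_0\}$. We already have $w_i \equiv 0$ on $\Sigma_{x_0,\lambda} = \bb R_+^N \setminus \overline{B_\lambda(x_0)}$ for all $i$, and $w_i = 0$ on $\bdy B_\lambda(x_0) \cap \overline{\bb R_+^N}$ by \eqref{eq:wi-Vanish-on-Blambda-boundary}; it remains to treat the interior region $B_\lambda(x_0) \cap \bb R_+^N$ (minus the singularity $x_0$). On $\Sigma$ we have $u_i \equiv u_{i,\lambda}$; since the Kelvin inversion $y \mapsto x_0 + \lambda^2(y-x_0)/|y-x_0|^2$ swaps $\Sigma_{x_0,\lambda}$ with $B_\lambda(x_0)\cap\bb R_+^N$ and is an involution, and since $u_{i,\lambda}$ is, by definition, built from $u_i$ via exactly this inversion, the identity $u_i = u_{i,\lambda}$ on $\Sigma$ is equivalent to the same identity on $B_\lambda(x_0) \cap \bb R_+^N$; that is, $u_i(y) = u_{i,\lambda}(y)$ there as well, so $w_i \equiv 0$ on $B_\lambda(x_0)\cap\bb R_+^N$. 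Combining the three regions yields \eqref{eq:A-Irreducible-Consequence}.

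The main obstacle is the first step: carefully justifying that a vanishing difference of ordered products $\prod_j u_j^{a_{i_0j}} = \prod_j u_{j,\lambda}^{a_{i_0j}}$ with $u_j \geq u_{j,\lambda} > 0$ forces $w_j \equiv 0$ for each $j$ with $a_{i_0j} > 0$. This is clean once one takes logarithms — the equation becomes $\sum_j a_{i_0 j}(\log u_j - \log u_{j,\lambda}) = 0$ with every summand nonnegative and $a_{i_0j} > 0$ — but it does require knowing $u_j, u_{j,\lambda} > 0$ on the relevant set, which holds on $\bb R_+^N$ and on $\bdy\bb R_+^N \setminus \{x_0\}$ by \eqref{eq:Main-Equations} and \eqref{eq:Inverted-Main-Equations}. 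The remaining steps are then essentially bookkeeping with the irreducibility hypothesis and the involution property of Kelvin inversions.
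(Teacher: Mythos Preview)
Your proof is correct and follows essentially the same route as the paper's: from $w_{i_0}\equiv 0$ you read off that the product difference in the interior equation vanishes, use the ordering $u_j\geq u_{j,\lambda}>0$ to force $w_j\equiv 0$ whenever $a_{i_0j}>0$, and then iterate via irreducibility. The only cosmetic difference is that the paper linearizes the product difference by a telescoping sum $\sum_j \phi_{ij}(u_j^{a_{ij}}-u_{j,\lambda}^{a_{ij}})$ with $\phi_{ij}>0$, whereas you take logarithms; both devices yield the same nonnegativity-of-summands argument. Your explicit treatment of the extension from $\Sigma_{x_0,\lambda}$ to $\overline{\bb R_+^N}\setminus\{x_0\}$ via the involution property of the Kelvin inversion is more detailed than the paper's one-line remark that this suffices, but the content is identical.
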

\begin{proof}
	Clearly, it suffices to show that the equality in \eqref{eq:A-Irreducible-Consequence} holds for all $y\in \Sigma_{x_0, \lambda}$. The proof is given for $x = 0$ only. The proof for general $x_0\in \bdy \bb R_+^N$ is similar. Fix $0<\lambda \leq\overline \lambda$. According to \eqref{eq:wixlambda-Equations}, the interior equation for $w_{i,\lambda}$ may be written
	
	\begin{equation}\label{eq:wilambda-equations-rewritten}
		-\lap w_{i,\lambda} 
			= 
			\sum_{j = 1}^m \phi_{ij}(u_j^{a_{ij}} - u_{j,\lambda}^{a_{ij}})
		\qquad \text{ in } \Sigma_\lambda \; 
		\text{ for all } i \in J,
	\end{equation}
	where
	
	\begin{equation*}
		\phi_{ij}
			= 
			\left(\prod_{k = 1}^{j -1} u_{k,\lambda}^{a_{ik}}\right)
			\left(\prod_{\ell = j + 1}^m u_\ell^{a_{i\ell}}\right)
			>
			0. 
	\end{equation*}
	Here the notational conventions $\prod_{k = 1}^0 u_{k, \lambda}^{a_{ik}} = 1$ and $\prod_{\ell = m+1}^m u_\ell^{a_{i\ell}} = 1$ are used. Lemma \ref{lemma:Some-wi-vanish-All-wi-vanish} now follows from the irreducibility of $\mathcal A$ and since $w_{j, \lambda}\geq 0$ in $\Sigma_\lambda$ for all $j\in J$. 
\end{proof}
\begin{lemma}\label{lemma:Critical-lambda-finite-critical-w-vanish}
	If $x_0\in \bdy \bb R_+^N$ with $\overline\lambda(x_0) <\infty$, then $w_{i, x_0, \overline \lambda(x_0)} \ident 0$ in $\overline{\bb R_+^N}\setminus\{x_0\}$ for all $i\in J$. 
\end{lemma}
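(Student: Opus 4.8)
The plan is to argue by contradiction: suppose $w_{i,x_0,\overline\lambda(x_0)}\not\equiv 0$ for some $i$. By Lemma~\ref{lemma:Some-wi-vanish-All-wi-vanish} this forces $w_{i,x_0,\overline\lambda(x_0)}>0$ in the interior $\Sigma_{x_0,\overline\lambda(x_0)}$ for every $i\in J$ (taking $x_0=0$ for concreteness and writing $\overline\lambda=\overline\lambda(0)$): indeed, from the representation \eqref{eq:wilambda-equations-rewritten} each $w_{i,\overline\lambda}$ is superharmonic in $\Sigma_{\overline\lambda}$, nonnegative there, and if it vanished at an interior point the strong maximum principle would give $w_{i,\overline\lambda}\equiv 0$, hence $w_{j,\overline\lambda}\equiv 0$ for all $j$ by the irreducibility argument, contrary to assumption. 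One must also rule out $w_{i,\overline\lambda}$ vanishing on the boundary portion $\bdy\Sigma_{\overline\lambda}\cap\bdy\bb R^N_+$; there a Hopf-lemma argument at a boundary zero, combined with the boundary equation in \eqref{eq:wixlambda-Equations}, forces $\bdy w_{i,\overline\lambda}/\bdy y_N<0$ while the boundary equation gives a sign contradiction once we know $w_{j,\overline\lambda}\ge 0$ and some $w_{j,\overline\lambda}>0$ nearby (this is where the structure of $\mathcal B$ and the signs of the $c_i$ enter, as flagged in the remark after Theorem~\ref{theorem:Main}). So the upshot of this first step is: either all $w_{i,\overline\lambda}\equiv 0$ (and we are done), or all $w_{i,\overline\lambda}$ are strictly positive in $\overline{\bb R^N_+}\setminus\overline B_{\overline\lambda}$.

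In the latter case the strategy is to show the moving-sphere process can be continued past $\overline\lambda$, contradicting the definition of $\overline\lambda$ as a supremum. Concretely, I would prove that there is $\epsilon_0>0$ so that $w_{i,\lambda}\ge 0$ in $\Sigma_\lambda$ for all $i$ and all $\lambda\in(\overline\lambda,\overline\lambda+\epsilon_0)$. The mechanism is a combination of three ingredients. First, a quantitative lower bound near the sphere: since $g_i(r,\theta)=r^{(N-2)/2}u_i(r,\theta)$ and one checks (as in Lemma~\ref{lemma:r_0-Exists}) that $\bdy g_i/\bdy r>0$ on a fixed annulus around $\bdy B_{\overline\lambda}$, the differences $w_{i,\lambda}$ stay positive on $(\overline{B_{\overline\lambda+\epsilon_0}^+}\setminus B_{\overline\lambda-\epsilon_0}^+)\setminus \overline B_\lambda$ for all $\lambda$ in a small two-sided neighborhood of $\overline\lambda$; here one uses that $u_i\in C^1(\overline{\bb R^N_+})$ and $u_i>0$ to get the derivative of $g_i$ in $r$ bounded below on a compact set. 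Second, a decay/behavior-at-infinity bound: by Lemmas~\ref{lemma:Harmonic-at-infty-ci-negative} and~\ref{lemma:SuperHarmonic-at-infty-ci-nonnegative}, $\liminf_{|y|\to\infty}|y|^{N-2}u_i(y)>0$, which pins down $w_{i,\lambda}\ge 0$ outside a large ball uniformly for $\lambda$ near $\overline\lambda$ (the Kelvin transform $u_{i,\lambda}$ decays like $|y|^{2-N}$ with a coefficient continuous in $\lambda$, so it stays below the $\liminf$ bound for $\lambda$ close to $\overline\lambda$). Third, in the remaining compact region $K=\overline{(B_{\overline\lambda+\epsilon_0}^+\cap\{|y|\le R\})}\setminus B_{\overline\lambda-\epsilon_0}$ we have, by the first step, $w_{i,\overline\lambda}\ge\delta>0$ on the relevant part; by continuity of $(y,\lambda)\mapsto w_{i,\lambda}(y)$ the functions $w_{i,\lambda}$ stay nonnegative (even positive) on that region for $\lambda$ slightly larger than $\overline\lambda$.

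To turn "nonnegative on the boundary of the relevant region" into "nonnegative throughout $\Sigma_\lambda$" I would invoke a maximum-principle argument for the system \eqref{eq:wixlambda-Equations}: rewrite the interior equation as $-\lap w_{i,\lambda}=\sum_j \phi_{ij}(u_j^{a_{ij}}-u_{j,\lambda}^{a_{ij}})$ and the boundary condition as $\bdy w_{i,\lambda}/\bdy y_N=c_i\sum_j\psi_{ij}(u_j^{b_{ij}}-u_{j,\lambda}^{b_{ij}})$ with positive coefficients $\phi_{ij},\psi_{ij}$, and apply the factorization $a^p-b^p=p\,\zeta^{p-1}(a-b)$ to write the right-hand sides as linear combinations of the $w_{j,\lambda}$ with coefficients that are bounded on the compact pieces and have controlled decay at infinity. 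This reduces matters to a maximum principle for a cooperative elliptic system with Robin-type boundary conditions on the exterior domain $\Sigma_\lambda$; one handles the unboundedness either by the Kelvin-transform/decay estimate above or by a comparison function of the type $|y-Ae_N|^{2-N}+|y|^{1-N}$ already used in the proof of Lemma~\ref{lemma:SuperHarmonic-at-infty-ci-nonnegative}. The main obstacle, I expect, is the boundary analysis: ensuring the Hopf lemma can be applied at a putative boundary zero of $w_{i,\overline\lambda}$ and that the sign of the boundary term genuinely contradicts it — this is delicate precisely because the $c_i$ can be negative and the coupling matrix $\mathcal B$ need not be diagonal for those indices, so one must use irreducibility of $\mathcal A$ (to propagate positivity of one $w_{j,\overline\lambda}$ to all of them in the interior, hence up to the boundary) together with the third clause of \eqref{eq:Matrix-B-Assumptions} to keep the boundary comparison clean. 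Once the contradiction with the maximality of $\overline\lambda$ is reached, the lemma follows.
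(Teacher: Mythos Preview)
Your overall architecture (contradiction; strict positivity of all $w_{i,\overline\lambda}$ in $\overline\Sigma_{\overline\lambda}\setminus\bdy B_{\overline\lambda}$ via the strong maximum principle and Hopf's lemma; then push $\lambda$ past $\overline\lambda$) matches the paper's. But two of your three ``ingredients'' do not work as stated, and these are precisely the places where the paper has to do real work.

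\textbf{Near the sphere.} You claim that, ``as in Lemma~\ref{lemma:r_0-Exists}'', $\partial g_i/\partial r>0$ on a fixed annulus around $\bdy B_{\overline\lambda}$. This is false in general: the proof of Lemma~\ref{lemma:r_0-Exists} shows $\partial g_i/\partial r>0$ only for $r$ \emph{small}, because it relies on the term $\tfrac{N-2}{2}\,r^{(N-4)/2}u_i$ dominating $r^{(N-2)/2}|\nabla u_i|$; away from the origin there is no such domination (for a standard bubble $g_i$ is increasing only up to a finite radius and then decreasing). The paper instead obtains positivity of $w_{i,\lambda}$ near $\bdy B_\lambda$ for $\lambda$ close to $\overline\lambda$ from a Hopf-type lower bound $\partial w_{i,\overline\lambda}/\partial\nu\geq\epsilon>0$ on $\bdy B_{\overline\lambda}\cap\overline{\bb R_+^N}$ (Claim~\ref{claim:Outter-Normal-Derivative}), together with continuity in $\lambda$. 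Note also that at the corner $\bdy B_{\overline\lambda}\cap\bdy\bb R_+^N$ the interior sphere condition fails, so ordinary Hopf does not apply; the paper builds a barrier $\phi(y)=\delta e^{\alpha y_N}(\abs y^2-\overline\lambda^2)$ to get \eqref{eq:Outter-Normal-Derivative} there. Your write-up does not address this corner at all.

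\textbf{At infinity.} Lemmas~\ref{lemma:Harmonic-at-infty-ci-negative} and~\ref{lemma:SuperHarmonic-at-infty-ci-nonnegative} give $\liminf_{\abs y\to\infty}\abs y^{N-2}u_i(y)=c_0>0$, but this alone does \emph{not} force $w_{i,\lambda}\geq0$ far out for $\lambda>\overline\lambda$: since $\abs y^{N-2}u_{i,\lambda}(y)\to\lambda^{N-2}u_i(0)$, you would need $c_0>\overline\lambda^{N-2}u_i(0)$ strictly, whereas $w_{i,\overline\lambda}\geq0$ only yields $c_0\geq\overline\lambda^{N-2}u_i(0)$. Equivalently, what is needed is $\liminf_{\abs y\to\infty}\abs y^{N-2}w_{i,\overline\lambda}(y)>0$, and this is a separate claim requiring its own comparison argument (the paper's Claims~\ref{claim:Critical-w-Superhormonic-at-infty-ci-neg} and~\ref{claim:Critical-w-Superhormonic-at-infty-ci-nonneg}, the latter using exactly the auxiliary function $\abs{y-Ae_N}^{2-N}+\abs y^{1-N}$ you mention, but applied to $w_{i,\overline\lambda}$ rather than to $u_i$). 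Once that strict lower bound is in hand, the paper passes to nearby $\lambda$ via the identity
\[
w_{i,\lambda}(y)=w_{i,\overline\lambda}(y)+\Bigl(\overline\lambda^{N-2}u_i\!\bigl(\tfrac{\overline\lambda^2 y}{\abs y^2}\bigr)-\lambda^{N-2}u_i\!\bigl(\tfrac{\lambda^2 y}{\abs y^2}\bigr)\Bigr)\abs y^{2-N}
\]
and uniform continuity of $u_i$ on a compact set, which is cleaner than invoking a cooperative-system maximum principle on an exterior domain.
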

\begin{proof}
	For simplicity, we assume $x_0 = 0$. By Lemma \ref{lemma:Some-wi-vanish-All-wi-vanish}, it suffices to show that there exists $i\in J$ such that $w_{i, \overline\lambda}\ident 0$ in $\overline{\bb R_+^N}\setminus\{0\}$. In fact, we only need to show this equality holds in $\Sigma_{\overline\lambda}$ for some $i\in J$. For the sake of obtaining a contradiction, suppose that for all $i\in J$, there is some point of $\Sigma_{\overline \lambda}$ at which $w_{i,\overline \lambda}$ is positive. By the maximum principle we have
	
	\begin{equation}\label{eq:All-critical-w-strictly-positive-interior}
		w_{i, \overline \lambda}(y) >0
			\qquad
			\text{ for all } 
			y\in \Sigma_{\overline \lambda}\; 
			\text{ and all }
			i\in J. 
	\end{equation}
	Moreover, 
	
	\begin{equation}\label{eq:All-critical-w-strictly-positive-lower-boundary}
		w_{i, \overline \lambda}(y)
			>
			0 
		\qquad \text{ for all } y\in \bdy \Sigma_{\overline \lambda} \setminus \bdy B_{\overline \lambda}\;
		\text{ and all }i\in J. 
	\end{equation}
	Indeed, if $\tilde y\in \bdy \Sigma_{\overline \lambda} \setminus \bdy B_{\overline \lambda}$ and $i_0\in J$ are such that with $w_{i_0, \overline \lambda}(\tilde y) = 0$, then apply Hopf's Lemma to $w_{i_0, \overline \lambda}$ on any ball $B\subset \Sigma_{\overline \lambda}$ such that $\bdy B\intersect \bdy\Sigma_{\overline \lambda} = \{\tilde y\}$ to deduce 
	
	\begin{equation}\label{eq:Hopf-on-any-ball}
		\frac{\partial w_{i_0, \overline \lambda}}{\partial y_N}(\tilde y) >0. 
	\end{equation}
	On the other hand, if $c_{i_0}<0$ then
	
	\begin{equation*}
		\frac{\partial w_{i_0, \overline\lambda}}{\partial y_N}(\tilde y)
			= 
			c_{i_0}
			\left(
				\prod_{j= 1}^m u_j(\tilde y)^{b_{i_0j}} - \prod_{j = 1}^m u_{j, \overline \lambda}(\tilde y)^{b_{i_0j}}
			\right)
			\leq 0. 
	\end{equation*}
	If $c_{i_0}\geq 0$, then 
	
	\begin{equation*}
		\frac{\partial w_{i_0, \overline\lambda}}{\partial y_N}(\tilde y)
			= 
			c_{i_0}
			\left(
				u_{i_0}(\tilde y)^{N/(N - 2)} - u_{i_0, \overline \lambda}(\tilde y)^{N/(N - 2)}
			\right)
			= 0.
	\end{equation*}
	In either case, \eqref{eq:Hopf-on-any-ball} is violated, so \eqref{eq:All-critical-w-strictly-positive-lower-boundary} holds. \\
	
	Now, for $y\in \bdy B_{\overline \lambda}\cap \bdy \Sigma_{\overline \lambda}$, let $\nu = \nu(y)$ denote the unit outer normal vector to $B_{\overline \lambda}$ (pointing into $\overline \Sigma_{\overline \lambda}$). 
	
	\begin{claim}\label{claim:Outter-Normal-Derivative}
		There exists $\epsilon>0$ such that 
		
		\begin{equation*}
			\frac{\partial w_{i, \overline \lambda}}{\partial \nu} (y)
				\geq
				\epsilon
			\qquad
			\text{ for all }
			y\in \bdy \Sigma_{\overline \lambda}\cap \bdy B_{\overline \lambda}
			\text{ and all } i\in J. 
		\end{equation*}
	\end{claim}
	\begin{proof}
		In view of \eqref{eq:All-critical-w-strictly-positive-interior} and \eqref{eq:wi-Vanish-on-Blambda-boundary}, a routine application of Hopf's Lemma yields the positivity of $\frac{\partial w_{i, \overline \lambda}}{\partial \nu}(y)$ for all $y\in \bdy \Sigma_{\overline \lambda}\setminus \bdy \bb R_+^N$ and all $i\in J$. Since $\bdy \Sigma_{\overline \lambda}\cap \bdy B_{\overline \lambda}$ is compact, Claim \ref{claim:Outter-Normal-Derivative} will be established once we show 
		\begin{equation}\label{eq:Outter-Normal-Derivative}
			\frac{\partial w_{i, \overline \lambda}}{\partial \nu}(y)
				>
				0
				\qquad
				\text{ for all }
				y\in \bdy B_{\overline \lambda}\intersect \bdy \bb R_+^N \; 
				\text{ and all } 
				i\in J. 
		\end{equation}
		To show this, define
		
		\begin{equation*}
			\Omega
				=
				\{
				y\in \Sigma_{\overline \lambda}: 
				{\rm dist}(y, \bdy B_{\overline \lambda}\cap \bdy \bb R_+^N)
				< 
				\frac{\overline \lambda}{2}
				\}	
		\end{equation*}
		and 
		
		\begin{equation*}
			\phi(y)
				= 
				\delta e^{\alpha y_N} (\abs y^2 - \overline \lambda^2), 
		\end{equation*}
		where $\delta >0$ (small) and $\alpha>0$ (large) are positive constants which are to be determined. Elementary computations yield
		
		\begin{equation}\label{eq:Hopf-Boundary-phi-Equations}
			\begin{cases}
				\Delta \phi>0
					& 
					\text{ in } \bb R_+^N
					\\
				\phi \ident 0
					& 
					\text{ on }\bdy B_{\overline \lambda}
					\\
				\frac{\partial \phi}{\partial y_N} = \alpha \phi
					&
					\text{ on } \bdy \bb R_+^N
					\\
				\frac{\partial \phi}{\partial \nu} = 2\delta \overline \lambda e^{\alpha y_N}
					& 
					\text{ on } 
					\bdy B_{\overline \lambda}. 
			\end{cases}
		\end{equation}
		Moreover, if $i$ is an index for which $c_i<0$, then by using each of the second item of \eqref{eq:wixlambda-Equations}, \eqref{eq:All-critical-w-strictly-positive-lower-boundary} and the third item of \eqref{eq:Hopf-Boundary-phi-Equations} one may verify that for any choice of $\alpha>0$
		
		\begin{equation}\label{eq:Nth-deriv-w-phi-c<0}
			\frac{\partial}{\partial y_N}(w_{i, \overline\lambda} - \phi)
				\leq
				- \alpha \phi
				\leq
				\frac{\alpha}{2}(w_{i, \lambda} - \phi)
			\qquad
			\text{ on }
			\bdy \Omega\cap\bdy \bb R_+^N. 
		\end{equation}
		If $i$ is an index for which $c_i\geq 0$, then by the Mean-Value Theorem, there is $\psi_i(y)\in [u_{i, \overline \lambda}(y), u_i(y)]$ such that 
		
		\begin{eqnarray*}
			\frac{\partial}{\partial y_N}(w_{i, \overline\lambda} - \phi)
				&=&
				c_i
				\left(
					u_i^{N/(N - 2)} - u_{i, \overline\lambda}^{N/(N -2)}
				\right)
				-
				\alpha \phi
				\\
				& = & 
				\frac{N}{N - 2} c_i \psi_i^{2/(N -2)} w_{i, \overline \lambda} - \alpha \phi
				\\
				& \leq &
				\frac{N}{N - 2} \left(\max_j \abs{c_j}\right)
					\left(\max_j \max_{\overline \Omega} u_j\right)^{2/(N -2)} w_{i, \overline \lambda}
					- \alpha \phi. 
		\end{eqnarray*}
		Therefore, by choosing $\alpha = \alpha(N, \max_j\abs{c_j}, \max_j\max_{\overline\Omega}u_j)$ sufficiently large, we obtain 
		
		\begin{equation}\label{eq:Nth-deriv-w-phi-c>0}
			\frac{\partial}{\partial y_N} (w_{i, \overline \lambda} - \phi)
				\leq
				\frac{\alpha}{2}(w_{i, \overline \lambda} - \phi)
				\qquad
				\text{ on } 
				\bdy \Omega\cap \bdy \bb R_+^N. 
		\end{equation}
		Combining \eqref{eq:Nth-deriv-w-phi-c<0} and \eqref{eq:Nth-deriv-w-phi-c>0} we see that there is a constant $C_1>0$ for which 
		
		\begin{equation}\label{eq:Nth-partial-w-phi}
			\frac{\partial}{\partial y_N} (w_{i, \overline \lambda} - \phi)
				\leq
				C_1(w_{i, \overline \lambda} - \phi)
				\qquad
				\text{ on } 
				\bdy \Omega\cap \bdy \bb R_+^N \; 
				\text{ for all } i\in J. 
		\end{equation}
		Fix any such $C_1$. After choosing $\delta$ sufficiently small $w_{i, \overline \lambda} - \phi$ is seen to satisfy
		
		\begin{equation}\label{eq:w-phi-Equations}
			\begin{cases}
				- \lap(w_{i, \overline \lambda} - \phi) >0
					& 
					\text{ in }\Omega
					\\
				w_{i, \overline \lambda} - \phi \ident 0
					& 
					\text{ on } \bdy \Omega\cap \bdy B_{\overline \lambda}
					\\
				w_{i, \overline \lambda} - \phi>0
					& 
					\text{ on } 
					\bdy \Omega \setminus \bdy \Sigma_{\overline \lambda}
			\end{cases}
			\qquad
			\text{ for all } i\in J. 
		\end{equation}
		By the maximum principle, if there exists $i_0\in J$ such that $w_{i_0, \overline \lambda} - \phi$ is negative at some point of $\overline\Omega$ then $w_{i_0, \overline \lambda} - \phi$ achieves a negative minimum value over $\overline \Omega$ at some point $\tilde y\in \bdy \Omega$. By the second and third items of \eqref{eq:w-phi-Equations}, we may assume $\tilde y\in \bdy \bb R_+^N\cap \{ y:\overline \lambda <\abs y \leq 3\overline \lambda/2\}$. Since $\tilde y$ is a minimizer of $w_{i_0, \overline \lambda} - \phi$ and by \eqref{eq:Nth-partial-w-phi}, we have
		
		\begin{equation*}
			0
			\leq
			\frac{\partial}{\partial y_N} (w_{i_0, \overline \lambda} - \phi)(\tilde y)
			\leq
			C_1(w_{i_0, \overline \lambda} - \phi)(\tilde y)
			<0, 
		\end{equation*}
		a contradiction. We conclude that $w_{i, \overline \lambda} \geq \phi$ in $\overline \Omega$ for all $i\in J$. In particular, $\frac{\partial w_{i, \overline \lambda}}{\partial \nu} \geq \frac{\partial \phi}{\partial \nu}$ on $\bdy B_{\overline \lambda}\cap \bdy \bb R_+^N$ for all $i\in J$. Combining this with the last item of \eqref{eq:Hopf-Boundary-phi-Equations}, we obtain inequality \eqref{eq:Outter-Normal-Derivative}. Claim \ref{claim:Outter-Normal-Derivative} follows. 
	\end{proof}
	In view of Claim \ref{claim:Outter-Normal-Derivative} and the continuity of $\lambda \mapsto w_{i, \lambda}$, we may choose $R_0>\bar \lambda$ such that 
	
	\begin{equation*}
		\frac{\partial w_{i, \lambda}}{\partial r}(y)
			\geq
			\frac \epsilon 2
		\qquad
		\text{ for all } y\in \overline{B_{R_0}^+}\setminus B_\lambda, \; 
		\text{ all } \lambda\in [\overline\lambda, R_0]\; 
		\text{ and all } i\in J. 
	\end{equation*}
	Therefore, 
	
	\begin{equation}\label{eq:wLambda-Positive-Larger-Lambda-1}
		w_{i, \lambda}(y)
			>
			0
		\qquad
		\text{ in } \overline{B_{R_0}^+}\setminus \overline B_\lambda\;
		\text{ for all } \lambda\in [\bar \lambda,R_0]\; 
		\text{ and all } i \in J.  
	\end{equation}
	%
	\begin{claim}\label{claim:Critical-w-Superhormonic-at-infty-ci-neg}
		If $i$ is an index for which $c_i<0$, then $\liminf_{\abs y\to \infty} \abs y^{N - 2} w_{i, \overline \lambda}(y) >0$. 
	\end{claim}
	\begin{proof}
		If $c_i\geq 0$ for all $i\in J$, there is nothing to prove. Otherwise, let $i$ be an index for which $c_i<0$ and define 
		
		\begin{equation*}
			h_i(y)
				= 
				\left(\min_{\bdy B_{R_0}\cap \overline {\bb R_+^N}} w_{i, \overline \lambda}\right)
				R_0^{N - 2}\abs y^{2-N}
			\qquad
			\text{ for } \abs y\geq R_0. 
		\end{equation*}
		By performing elementary computations using \eqref{eq:All-critical-w-strictly-positive-interior}, \eqref{eq:All-critical-w-strictly-positive-lower-boundary} and the negativity of $c_i$, one may verify that $w_{i, \overline\lambda} - h_i$ satisfies 
		
		\begin{equation}\label{eq:wilambda-hi-Equations}
			\begin{cases}
				- \lap (w_{i, \overline \lambda} - h_i)\geq 0
					&
					\text{ in } \bb R_+^N \setminus \overline B_{R_0}
					\\
				w_{i, \overline \lambda} - h_i \geq 0
					& 
					\text{ on } \bdy B_{R_0} \cap \overline{\bb R_+^N}
					\\
				\frac{\partial}{\partial y_N}(w_{i, \overline \lambda} - h_i)
					= 
					c_i\left(\prod_{j = 1}^m u_j^{a_{ij}} - \prod_{j = 1}^m u_{j,\overline \lambda}^{a_{ij}}\right)
					<
					0
					&
					\text{ on } \bdy \bb R_+^N\setminus B_{R_0}. 
			\end{cases}
		\end{equation}
		Moreover, using \eqref{eq:All-critical-w-strictly-positive-interior} once again we have
		
		\begin{equation}\label{eq:liminf-wilambda-hi}
			\liminf_{\abs y \to \infty}(w_{i, \overline \lambda} - h_i)(y)
				\geq
				0. 
		\end{equation}
		Consequently, if $w_{i, \overline\lambda} - h_i$ is negative at some point of $\overline{\bb R_+^N}\setminus B_{R_0}$, then $w_{i, \overline\lambda} - h_i$ attains a negative minimum value over $\overline{\bb R_+^N}\setminus B_{R_0}$ at some point $\tilde y\in \overline{\bb R_+^N}\setminus B_{R_0}$. By the maximum principle, we may assume $\tilde y\in \bdy(\bb R_+^N\setminus B_{R_0})$. By the second item of \eqref{eq:wilambda-hi-Equations} we must have $\tilde y\in \bdy \bb R_+^N\setminus \overline B_{R_0}$. On the other hand, since $\tilde y$ minimizes $w_{i, \overline \lambda} - h_i$ and by the third item of \eqref{eq:wilambda-hi-Equations} we have
		
		\begin{equation*}
			0
				\leq
				\frac{\partial}{\partial y_N}(w_{i, \overline \lambda} - h_i)(\tilde y)
				<
				0, 
		\end{equation*}
		a contradiction. We conclude that $w_{i, \overline \lambda}\geq h_i$ in $\bb R_+^N\setminus B_{R_0}$. Claim \ref{claim:Critical-w-Superhormonic-at-infty-ci-neg} follows immediately. 
	\end{proof}
	\begin{claim}\label{claim:Critical-w-Superhormonic-at-infty-ci-nonneg}
		If $i$ is an index for which $c_i\geq 0$, then $\liminf_{\abs y\to \infty} \abs y^{N - 2} w_{i, \overline \lambda}(y) >0$. 
	\end{claim}
	\begin{proof}
		The proof is similar to the proof of Lemma \ref{lemma:SuperHarmonic-at-infty-ci-nonnegative}. Suppose $i$ is an index for which $c_i\geq 0$ and set 
		
		\begin{equation*}
			\mathcal O_i
				= 
				\{
				y\in \Sigma_{\overline \lambda}:
				w_{i, \overline \lambda}(y) < u_{i, \overline \lambda}(y)
				\}. 
		\end{equation*}
		To prove Claim \ref{claim:Critical-w-Superhormonic-at-infty-ci-nonneg}, it suffices to show that 
		
		\begin{equation*}
			\liminf_{\abs y\to\infty; y\in \overline{\mathcal O}_i} \abs y^{N - 2} w_{i, \overline \lambda} (y) >0. 
		\end{equation*}
		We have
		
		\begin{equation}\label{eq:ui-Upper-bound-Oi}
			u_i (y)
				\leq
				2\overline \lambda^{N -2}
				\left(
					\max_j \max_{\overline{B_{\overline \lambda}^+}}u_j
				\right)\abs y^{2-N}
			\qquad
			\text{ for all } y\in \mathcal O_i. 
		\end{equation}
		According to the Mean-Value Theorem, there is $\psi_i(y)\in [u_{i, \overline\lambda}(y), u_i(y)]$ such that for all $y\in \bdy \Sigma_{\overline \lambda} \cap \bdy \bb R_+^N$,  
		
		\begin{eqnarray*}
			u_i(y)^{N/(N -2)} - u_{i, \overline \lambda}(y)^{N/(N -2)}
				& = & 
				\frac{N}{N - 2} \psi_i(y)^{2/(N - 2)} w_{i, \overline \lambda}(y)
				\\
				& \leq & 
				\frac{N}{N - 2} u_i(y)^{2/(N - 2)} w_{i, \overline \lambda}(y). 
		\end{eqnarray*}
		Therefore, using the boundary equation for $w_{i, \overline \lambda}$ in \eqref{eq:wixlambda-Equations} corresponding to $c_i\geq 0$ and using inequality \eqref{eq:ui-Upper-bound-Oi}, there is a constant $C_1 = C_1(N, \overline \lambda, \max_j \abs{c_j}, \max_j \max_{\overline B_{\overline \lambda}^+} u_j)>0$ such that 
		
		\begin{equation*}
				\left(\frac{\partial}{\partial y_N} - C_1 \abs y^{-2}\right) w_{i, \overline \lambda}
					\leq 
					0
				\qquad
				\text{ for all }
				y\in \overline{\mathcal O}_i \cap \bdy \bb R_+^N. 
		\end{equation*}
		For $A\gg 1$ large and to be determined, let $\xi(y)$ be as in \eqref{eq:xi-Definition}. Then $\xi$ still satisfies \eqref{eq:xi-Equations} and by choosing $A$ sufficiently large (and depending on $C_1$) we may achieve
		
		\begin{equation*}
			\left(\frac{\partial}{\partial y_N} - C_1 \abs y^{-2}\right) \xi(y)
				>
				0
			\qquad\text{ on } \bdy \bb R_+^N \setminus B_{2A}. 
		\end{equation*}
		Fix any such $A$ and choose $\epsilon >0$ sufficiently small so that 
		
		\begin{equation*}
			(w_{i, \overline \lambda} - \epsilon \xi)(y)>0
				\qquad
				\text{ on } 
				(\bdy B_{2A} \cap \overline {\bb R_+^N})
				\cup
				(\bdy \mathcal O_i \cap \bb R_+^N). 
		\end{equation*}
		Then 
		
		\begin{equation}\label{eq:w-EpsilonXi-Equations}
		\begin{cases}
			- \lap(w_{i, \overline \lambda} - \epsilon\xi)>0
				& 
				\text{ in } \mathcal O_i \setminus \overline B_{2A}
				\\
			(w_{i, \overline \lambda} - \epsilon \xi)>0
				&
				\text{ on } \bdy(\mathcal O_i \setminus B_{2A})\setminus\bdy \bb R_+^N
				\\
			\left(\frac{\partial}{\partial y_N} - C_1 \abs y^{-2}\right)(w_{i, \overline\lambda} - \epsilon \xi)
				<
				0
				&
				\text{ on } (\overline {\mathcal O}_i\setminus B_{2A}) \cap \bdy \bb R_+^N. 
		\end{cases}
		\end{equation}
		Moreover, $\liminf_{\abs y\to \infty}(w_{i, \overline \lambda} - \epsilon \xi)(y) \geq 0$. Claim \ref{claim:Critical-w-Superhormonic-at-infty-ci-nonneg} now follows by the argument in the proof of Lemma \ref{lemma:SuperHarmonic-at-infty-ci-nonnegative}. 
	\end{proof}
	In view of Claims \ref{claim:Critical-w-Superhormonic-at-infty-ci-neg} and \ref{claim:Critical-w-Superhormonic-at-infty-ci-nonneg} and with $R_0$ as in \eqref{eq:wLambda-Positive-Larger-Lambda-1} we may choose $c_0>0$ such that 
	
	\begin{equation*}
		w_{i, \overline \lambda}(y) 
			\geq
			c_0 \abs y^{2-N}
		\qquad
		\text{ for all } y\in \overline{\bb R_+^N}\setminus B_{R_0}\; 
		\text{ and all } i \in J. 
	\end{equation*}
	Therefore, for any $\lambda>0$ and any $i\in J$ we have
	
	\begin{equation}\label{eq:wilambda-HarmonicLowerBound}
		w_{i, \lambda}(y)
			\geq 
			c_0\abs y^{2-N} 
			+
			\left(
			\overline \lambda^{N-2} u_i\left(\frac{\overline\lambda^2 y}{\abs y^2}\right) - \lambda^{N -2} u_i\left(\frac{\lambda^2 y}{\abs y^2}\right)
			\right)
			\abs y^{2-N}
		\qquad
		\text{ for all } 
		y\in \overline{\bb R_+^N} \setminus B_{R_0}. 
	\end{equation}
	By uniform continuity of $u_i$ on $\overline B_{R_0}^+$, there exists $\epsilon_0\in (0, R_0 - \overline \lambda)$ such that 
	
	\begin{equation*}
		\abs{
			\overline \lambda^{N -2} u_i\left(\frac{\overline\lambda^2 y}{\abs y^2}\right)
			- 
			\lambda^{N -2}u_i\left(\frac{\lambda^2 y}{\abs y^2}\right)
		}
		<
		\frac{c_0}{2}
		\qquad
		\text{ for all }
		y\in \overline{\bb R_+^N} \setminus B_{R_0}, \; 
		\text{ all } \lambda \in [\overline \lambda , \overline \lambda +\epsilon_0]\;
		\text{ and all } i\in J. 
	\end{equation*}
	Using this estimate in inequality \eqref{eq:wilambda-HarmonicLowerBound}, we conclude that
	
	\begin{equation*}
		w_{i, \lambda}(y)
			> 
			\frac{c_0}{2}\abs y^{2-N}
		\qquad \text{ for all }
		y\in \overline{\bb R_+^N}\setminus B_{R_0}, \; 
		\text{ all } \lambda \in [\overline \lambda, \overline\lambda +\epsilon_0]\; 
		\text{ and all } i\in J. 
	\end{equation*}
	Combining this estimate with \eqref{eq:wLambda-Positive-Larger-Lambda-1}, we conclude that $w_{i, \lambda}(y)\geq0$ in $\overline{\bb R_+^N}\setminus B_\lambda$ for all $\lambda\in [\overline \lambda, \overline \lambda + \epsilon_0]$ and all $i\in J$. This contradicts the definition of $\overline \lambda$. Lemma \ref{lemma:Critical-lambda-finite-critical-w-vanish} is established. 
\end{proof}
\begin{lemma}\label{lemma:If-one-lambda-unbounded-all-lambda-unbounded}
	If there exists $ x_0\in \bdy \bb R_+^N$ for which $\overline\lambda( x_0)= \infty$, then $\overline \lambda(x) = \infty$ for all $x\in \bdy \bb R_+^N$. 
\end{lemma}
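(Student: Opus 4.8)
The plan is to prove the lemma via the standard moving--sphere dichotomy: the behaviour of $\overline\lambda(\cdot)$ is governed entirely by the rate of decay of the $u_i$ at infinity, so if the process fails to stop at one point it cannot stop at any point. Concretely, I would first show that $\overline\lambda(x_0)=\infty$ forces $\liminf_{\abs y\to\infty}\abs y^{N-2}u_i(y)=+\infty$ for every $i\in J$, and then show that $\overline\lambda(x)<\infty$ at any point $x$ would force $\abs y^{N-2}u_i(y)$ to have a finite limit at infinity, a contradiction.

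For the first step I would argue as follows. By definition, $\overline\lambda(x_0)=\infty$ means $w_{i,x_0,\mu}\geq0$ in $\Sigma_{x_0,\mu}$, i.e.
\[
	u_i(y)\geq\Bigl(\tfrac{\mu}{\abs{y-x_0}}\Bigr)^{N-2}u_i\Bigl(x_0+\tfrac{\mu^2(y-x_0)}{\abs{y-x_0}^2}\Bigr)
	\qquad\text{whenever }\abs{y-x_0}>\mu,
\]
for every $\mu>0$ and every $i\in J$. Fixing $\mu$ and letting $\abs y\to\infty$, the inverted point $x_0+\mu^2(y-x_0)/\abs{y-x_0}^2$ remains in $\overline{B_\mu^+(x_0)}$ and tends to $x_0$, where $u_i$ is continuous and strictly positive; since $\abs y/\abs{y-x_0}\to1$ this yields $\liminf_{\abs y\to\infty}\abs y^{N-2}u_i(y)\geq\mu^{N-2}u_i(x_0)$. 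As $\mu>0$ is arbitrary we conclude $\liminf_{\abs y\to\infty}\abs y^{N-2}u_i(y)=+\infty$ for all $i$.

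For the second step, fix an arbitrary $x\in\bdy\bb R_+^N$ and suppose toward a contradiction that $\overline\lambda(x)<\infty$. By Lemma~\ref{lemma:Critical-lambda-finite-critical-w-vanish} we then have $w_{i,x,\overline\lambda(x)}\equiv0$ on $\overline{\bb R_+^N}\setminus\{x\}$, i.e. $u_i\equiv u_{i,x,\overline\lambda(x)}$, so that
\[
	\abs{y-x}^{N-2}u_i(y)=\overline\lambda(x)^{N-2}\,u_i\Bigl(x+\tfrac{\overline\lambda(x)^2(y-x)}{\abs{y-x}^2}\Bigr)\longrightarrow\overline\lambda(x)^{N-2}u_i(x)<\infty
\]
as $\abs y\to\infty$; again using $\abs y/\abs{y-x}\to1$ this gives $\lim_{\abs y\to\infty}\abs y^{N-2}u_i(y)=\overline\lambda(x)^{N-2}u_i(x)$, a finite number, contradicting the first step. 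Hence $\overline\lambda(x)=\infty$ for every $x$, which is the assertion of the lemma.

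The argument is essentially soft once these two asymptotic identities are in place; I do not expect a genuine obstacle here. The only point needing a little care is the limit passage in the first step, where one uses continuity of $u_i$ up to the boundary at $x_0$ together with $u_i(x_0)>0$ (both guaranteed since $(u_1,\dots,u_m)\in C^1(\overline{\bb R_+^N})$ and $u_i>0$ on $\overline{\bb R_+^N}$).
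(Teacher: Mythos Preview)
Your proposal is correct and follows essentially the same approach as the paper: first use $\overline\lambda(x_0)=\infty$ together with the moving-sphere inequality to force $\abs y^{N-2}u_i(y)\to\infty$, then invoke Lemma~\ref{lemma:Critical-lambda-finite-critical-w-vanish} at any point with $\overline\lambda(x)<\infty$ to obtain a finite limit of $\abs y^{N-2}u_i(y)$, a contradiction. Your write-up is slightly more explicit about the limit passage (continuity at $x_0$, $\abs y/\abs{y-x_0}\to1$), but the argument is the same.
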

\begin{proof}
	Suppose $x_0\in \bdy \bb R_+^N$ is such that $\overline \lambda(x_0) = \infty$. By definition of $\overline \lambda(x_0)$, for all $\lambda>0$ we have
	
	\begin{equation*}
		u_i(y)
			\geq
			\left( \frac{\lambda}{\abs{y - x_0}}\right)^{N - 2}
			u_i\left(x_0 + \frac{\lambda^2(y - x_0)}{\abs{ y- x_0}^2}\right)
		\qquad
		\text{ in } \Sigma_{x_0,\lambda}\; 
		\text{ for all } i \in J. 
	\end{equation*}
	Consequently, $\abs y^{N - 2} u_i(y) \to\infty$ as $\abs y \to \infty$ for all $i \in J$. Now suppose $x\in \bdy \bb R_+^N$ is such that $\overline \lambda(x)<\infty$. By Lemma \ref{lemma:Critical-lambda-finite-critical-w-vanish}, $u_i = u_{i, x, \bar\lambda(x)}$ on $\overline{\bb R_+^N} \setminus \{x\}$ for all $i\in J$. Multiplying this equality by $\abs y^{N - 2}$ and letting $\abs y\to \infty$ we obtain
	
	\begin{equation*}
		\abs y^{N - 2}u_i(y)
			\to 
			\overline \lambda(x)^{N - 2} u_i(x) <\infty
			\qquad 
			\text{ for all } i \in J, 
	\end{equation*}
	which is a contradiction. 
\end{proof}

\begin{lemma}\label{lemma:All-Critical-Lambdas-Unbounded}
	For each $x\in \bdy \bb R_+^N$, $\overline \lambda(x)<\infty$. 
\end{lemma}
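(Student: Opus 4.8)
The plan is to argue by contradiction. Suppose $\overline\lambda(x_0)=\infty$ for some $x_0\in\bdy\bb R_+^N$. By Lemma~\ref{lemma:If-one-lambda-unbounded-all-lambda-unbounded} this forces $\overline\lambda(x)=\infty$ for \emph{every} $x\in\bdy\bb R_+^N$, and hence, by the definition of $\overline\lambda$ together with the continuity of $u_i$ and $u_{i,x,\lambda}$ up to $\bdy\bb R_+^N\setminus\{x\}$, one has $u_{i,x,\lambda}\le u_i$ on $\overline{\bb R_+^N}\setminus B_\lambda(x)$ for every $x\in\bdy\bb R_+^N$, every $\lambda>0$, and every $i\in J$. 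Taking $\lambda=\sqrt{rs}$ with $0<s<r$ and evaluating this inequality at a point $x+r\theta$, where $\theta$ is any unit vector with $x+t\theta\in\overline{\bb R_+^N}$ for $t>0$, rewrites it as a ray-wise monotonicity statement: for each such $x$ and $\theta$, the function $t\mapsto t^{(N-2)/2}u_i(x+t\theta)$ is non-decreasing on $(0,\infty)$. (The case $x=x_0$, $\theta$ radial from $x_0$ is of course already behind Lemma~\ref{lemma:If-one-lambda-unbounded-all-lambda-unbounded}; here I need the full family of centers $x$.)

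The crucial step --- and the one I expect to be the main obstacle --- is to promote this ray-wise monotonicity to the statement that each $u_i$ depends only on the last variable, say $u_i(y)=U_i(y_N)$. To see this, fix $i$ and two points $P=(p,a)$ and $Q=(q,a)$ in $\bb R_+^N$ with the same height $a>0$. For small $\epsilon>0$ the line through $P$ and the lowered point $Q_\epsilon=(q,a-\epsilon)$ meets $\bdy\bb R_+^N$ at a single point $x_\epsilon$, and the three points occur in the order $P,Q_\epsilon,x_\epsilon$ along this line; in particular $Q_\epsilon$ and $P$ lie on a ray issuing from $x_\epsilon$ into $\bb R_+^N$ with $|Q_\epsilon-x_\epsilon|<|P-x_\epsilon|$. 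The monotonicity above then gives $|Q_\epsilon-x_\epsilon|^{(N-2)/2}u_i(Q_\epsilon)\le|P-x_\epsilon|^{(N-2)/2}u_i(P)$, and an elementary computation gives $|Q_\epsilon-x_\epsilon|/|P-x_\epsilon|=1-\epsilon/a$. Letting $\epsilon\to0^+$ and using continuity of $u_i$ yields $u_i(Q)\le u_i(P)$; the symmetric construction (lowering $P$ instead of $Q$) gives $u_i(P)\le u_i(Q)$, so $u_i(P)=u_i(Q)$. Since $P,Q$ were arbitrary points at a common height, $u_i(y)=U_i(y_N)$ with $U_i\in C^2((0,\infty))\cap C^1([0,\infty))$ and $U_i>0$ on $[0,\infty)$.

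Finally I would substitute $u_i(y)=U_i(y_N)$ into \eqref{eq:Main-Equations}. Since $\lap u_i=U_i''(y_N)$, the interior equations become the autonomous ODE system $U_i''+\prod_{j=1}^m U_j^{a_{ij}}=0$ on $(0,\infty)$; in particular each $U_i$ is strictly concave, so $U_i'$ is strictly decreasing. If $U_i'(t_0)<0$ for some $t_0>0$ then $U_i(t)<U_i(t_0)+U_i'(t_0)(t-t_0)\to-\infty$ as $t\to\infty$, contradicting positivity of $U_i$; hence $U_i'\ge0$ everywhere, so each $U_j$ is non-decreasing and $U_j(t)\ge U_j(0)>0$ for all $t\ge0$. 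Consequently $U_i''(t)=-\prod_j U_j(t)^{a_{ij}}\le-\kappa_i$ for all $t>0$, where $\kappa_i:=\prod_j U_j(0)^{a_{ij}}>0$, and integrating gives $U_i'(t)\le U_i'(1)-\kappa_i(t-1)\to-\infty$, contradicting $U_i'\ge0$. This contradiction shows $\overline\lambda(x)<\infty$ for every $x\in\bdy\bb R_+^N$. (Note the boundary conditions in \eqref{eq:Main-Equations} are not needed here.) Combined with Lemma~\ref{lemma:Critical-lambda-finite-critical-w-vanish}, this also yields Proposition~\ref{prop:Critical-lambda-finite-Critical-w-vanish}.
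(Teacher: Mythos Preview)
Your proof is correct and follows the same overall strategy as the paper: assume $\overline\lambda(x_0)=\infty$, invoke Lemma~\ref{lemma:If-one-lambda-unbounded-all-lambda-unbounded} to get $\overline\lambda\equiv\infty$, deduce that each $u_i$ depends only on $y_N$, and then derive a contradiction from the resulting ODE system. The differences are in execution rather than architecture. First, where the paper simply cites Lemma~\ref{lemma:t-Dependence-Only} (a known calculus lemma from \cite{LiZhu1995,ChipotShafrirFila1996,LiZhang2003}) to obtain $y_N$-dependence, you give a self-contained geometric argument: the moving-spheres inequality yields ray-wise monotonicity of $t\mapsto t^{(N-2)/2}u_i(x+t\theta)$ for every center $x\in\bdy\bb R_+^N$, and your tilted-segment construction (lowering $Q$ to $Q_\epsilon$, intersecting with $\bdy\bb R_+^N$, and sending $\epsilon\to 0$) is a clean direct proof of that lemma in this setting. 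Second, your ODE endgame is slightly more efficient than the paper's: once $U_i'\ge 0$ forces $U_j(t)\ge U_j(0)>0$, you bound $U_i''\le -\kappa_i<0$ uniformly and integrate to force $U_i'\to -\infty$, whereas the paper argues via integrability of $\prod_j u_j^{a_{ij}}$ on $(0,\infty)$ to produce an index with $\liminf u_{i_0}=0$. Both routes work; yours avoids the detour through $L^1$ and, as you note, never touches the boundary condition.
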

\begin{proof}
	If Lemma \ref{lemma:All-Critical-Lambdas-Unbounded} fails, then by Lemma \ref{lemma:If-one-lambda-unbounded-all-lambda-unbounded}, we have $\overline \lambda(x) = \infty$ for all $x\in \bdy \bb R_+^N$. By Lemma \ref{lemma:t-Dependence-Only}, we see that for all $i\in J$, $u_i(y)$ depends only on $y_N$. In this case, \eqref{eq:Main-Equations} becomes
	
	\begin{equation}\label{eq:Main-Equations-ODE}
		\begin{cases}
			u_i''(t) = -\prod_{j = 1}^m u_j(t)^{a_{ij}}
				& 
				\text{ in } (0, \infty)
				\\
			u_i'(0) = c_i\prod_{j = 1}^m u_j(0)^{b_{ij}}
				&
				\\
			u_i(t)>0
				& 
				\text{ on } 
				[0,\infty)
		\end{cases}
		\qquad
		\text{ for all } i\in J. 
	\end{equation}
	Combining the first and third items of \eqref{eq:Main-Equations-ODE}, we see that $u_i^\prime$ is strictly decreasing in $(0,\infty)$ for all $i\in J$. \\
	Now, observe that there is no index $i_0\in J$ for which $u_{i_0}^\prime(0) = 0$. Indeed, if such an $i_0$ were to exist then since $u_{i_0}^\prime $ is strictly decreasing, we would have $u_{i_0}^\prime(1)<0$. By choosing $t$ sufficiently large we could achieve
	
	\begin{equation*}
		u_{i_0}(t)
			= 
			u_{i_0}(1) + \int_{1}^{t}u_{i_0}^\prime (s)\; ds
		 	\leq
			u_{i_0}(1) + u_{i_0}^\prime(1)(t - 1)
			<
			0, 
	\end{equation*}
	which contradicts the third item of \eqref{eq:Main-Equations-ODE}. By a similar argument, we see that there is no index $i_0\in J$ for which $u_{i_0}^\prime(0)<0$. Therefore, we must have $u_i^\prime(0) >0$ for all $i\in J$. Moreover, by an argument similar to the above, we see that 
	
	\begin{equation*}
		u_i^\prime(t)
			>0
		\qquad
		\text{ for all } t\in [0,\infty)\; 
		\text{ and all } i\in J. 
	\end{equation*}
	In particular, $u_i^\prime$ is decreasing and bounded below by zero, so 
	
	\begin{equation*}
		\ell_i = \lim_{t\to\infty} u_i^\prime(t)
	\end{equation*}
	exists and is non-negative for all $i\in J$. Since both $u_i(0)>0$ and $u_i^\prime(t)>0$ in $[0,\infty)$ for all $i\in J$, there exists $\epsilon>0$ such that 
	
	\begin{equation}\label{eq:All-ui-Bounded-Below}
		u_i(t)\geq \epsilon
			\qquad
			\text{ for all } t\in [0,\infty)\;
			\text{ and all }i\in J. 
	\end{equation}
	On the other hand, by the first equality of \eqref{eq:Main-Equations-ODE}, we have
	
	\begin{equation*}
		u_i^\prime(t) - u_i^\prime(0)
			=
			- \int_0^t \prod_{j = 1}^m u_j(s)^{a_{ij}} \; ds. 
	\end{equation*}
	Letting $t\to \infty$ in this equation we obtain 
	
	\begin{equation*}
		u_i^\prime(0) - \ell_i  
			= 
			\int_0^\infty \prod_{j = 1}^m u_j(s)^{a_{ij}}\; ds, 
	\end{equation*}
	so that $\prod_{j = 1}^m u_j^{a_{ij}} \in L^1(0,\infty)$. In particular, this integrability provides the existence of $i_0\in J$ for which $\liminf_{t \to \infty} u_{i_0}(t) = 0$. This contradicts \eqref{eq:All-ui-Bounded-Below}. Lemma \ref{lemma:All-Critical-Lambdas-Unbounded} is established. 
\end{proof}
\begin{proof}[Proof of Proposition \ref{prop:Critical-lambda-finite-Critical-w-vanish}]
	Combine the results of Lemmas \ref{lemma:Critical-lambda-finite-critical-w-vanish} and \ref{lemma:All-Critical-Lambdas-Unbounded}. 
\end{proof}

\section{Completion of the Proof of Theorem \ref{theorem:Main}}\label{section:DeduceFormofU}
%
%

By Proposition \ref{prop:Critical-lambda-finite-Critical-w-vanish}, for all $x\in \bdy \bb R_+^N$, we have both $\overline\lambda(x)<\infty$ and 

\begin{equation}
\label{eq:Critical-Kelvin-Symmetry}
	u_i(y)
		= 
		\left(\frac{\overline \lambda(x)}{\abs{y - x}}\right)^{N - 2}
		u_i\left( x+ \frac{\overline \lambda(x)^2(y - x)}{\abs{ y - x}^2}\right)
	\qquad
	\text{ in } \overline{ \bb R_+^N}\setminus\{x\}\;
	\text{ for all }  i \in J. 
\end{equation}
Restricting this equality to $\bb R^{N-1} = \bdy \bb R_+^N$, writing $y = y' + y_N e_N$ with $y'\in \bdy \bb R_+^N$ and applying Lemma \ref{lemma:Critical-Kelvin-Symmetry} on $\bb R^{N - 1}$, for each $i\in J$ we obtain $A_i\geq 0$, $d_i>0$ and $\bar x_i\in \bdy \bb R_+^N$ such that

\begin{equation}
\label{eq:i-Dependent-ui-Expression}
	u_i(y')
		= 
		\frac{A_i}{\left( d_i^2 + \abs{y' - \bar x_i}^2\right)^{(N - 2)/2}}
	\qquad
	\text{ for all } y'\in \bdy \bb R_+^N. 
\end{equation}
By this expression and by \eqref{eq:Critical-Kelvin-Symmetry}, it is easy to see that 

\begin{equation}
\label{eq:Ai-Expression}
	A_i 
		= 
		\lim_{\abs{y'}\to\infty} \abs{y'}^{N - 2} u_i(y')
		= 
		\overline \lambda(x)^{N - 2} u_i(x)
		>
		0
	\qquad
	\text{ for all } x\in \bdy \bb R_+^N. 
\end{equation}
Next, observe that 

\begin{equation}
\label{eq:ij-Parameter-Equality}
	d_i = d_j
	\quad \text{ and } \quad
	\bar x_i = \bar x_j
	\qquad
	\text{ for all } (i, j)\in  J\times J. 
\end{equation}
Indeed, by \eqref{eq:Ai-Expression} we have

\begin{equation*}
	\frac{u_i(x)}{A_i} = \frac{u_j(x)}{A_j}
	\qquad \text{ for all } x\in \bdy \bb R_+^N\; 
	\text{ and all } (i, j)\in J\times J. 
\end{equation*}
In view of \eqref{eq:i-Dependent-ui-Expression}, the above equality yields

\begin{equation*}
	d_i^2 + \abs{x - \bar x_i}^2 = d_j^2 + \abs{x - \bar x_j}^2
	\qquad \text{ for all } x\in \bdy \bb R_+^N\; 
	\text{ and all } (i, j)\in J\times J. 
\end{equation*}
The equalities in \eqref{eq:ij-Parameter-Equality} follow immediately. \\

Returning to \eqref{eq:i-Dependent-ui-Expression} with \eqref{eq:ij-Parameter-Equality}, and using $d$ to denote the common value of $d_i$ and $\bar x$ to denote the common value of $\bar x_i$, we obtain 

\begin{equation}\label{eq:Restriction-of-ui}
	u_i(x)
		= 
		\frac{A_i}{\left( d^2 + \abs{x - \bar x}^2\right)^{(N - 2)/2}}
	\qquad
	\text{ for all } x \in \bdy \bb R_+^N \;
	\text{ and all } i\in J. 
\end{equation}
Now that we know the form of the restriction of $u_i$ to $\bdy \bb R_+^N$ known, we wish to deduce the form of $u_i$. To achieve this we follow the arguments of \cite{ChipotShafrirFila1996}, \cite{Bianchi1997}, \cite{LiZhang2003}. Using \eqref{eq:Ai-Expression} to replace $A_i$ in \eqref{eq:Restriction-of-ui}, we see that 

\begin{equation}\label{eq:Exact-size-Critical-lambda}
	\overline\lambda(x)^2 = d^2 + \abs{ x - \bar x}^2
	\qquad
	\text{ for all } x\in \bdy \bb R_+^N. 
\end{equation}
Setting $Q = \bar x + de_N$ and $P = \bar x - de_N$, equation \eqref{eq:Exact-size-Critical-lambda} says that for each $x\in \bdy \bb R_+^N$, $\bdy B(x, \overline \lambda(x))$ contains both $P$ and $Q$. \\
Next, for $y\in \bb R^N$ consider

\begin{equation*}
	Ty 
		= 
		P + \frac{4d^2(y - P)}{\abs{ y - P}^2}, 
\end{equation*}
the conformal inversion of $y$ about $\bdy B(P, 2d)$. By performing elementary computations, one may verify that $T$ enjoys the following properties. 

\begin{enumerate}
	\item[(i)] $T = T^{-1}$ on $\bb R^N \cup\{\infty\}$
	\item[(ii)] $T(\bb R_+^N)= B(Q, 2d)$
	\item[(iii)] For each $x\in \bdy \bb R_+^N$, the image of $\bdy B(x, \overline \lambda(x))$ under $T$ is the hyperplane $\mathcal H(x)$ through $Q$ that is orthogonal to $x - P$. 
	\item[(iv)] If $z$ and $\tilde z$ are symmetric about $\mathcal  H(x)$, then $Tz$ and $T\tilde z$ are symmetric about $\bdy B(x,\bar \lambda(x))$ in the sense that 

		\begin{equation}\label{eq:yTilde=yLambda}
			T\tilde z 
			= 
			x + \frac{\bar \lambda(x)^2(Tz - x)}{\abs{Tz - x}^2}. 
		\end{equation}
\end{enumerate}
See Figure \ref{figure:Properties-of-T} for a visual representation of the mapping properties of $T$. 
\begin{center}
\begin{figure}[h!]
	\begin{tikzpicture}[scale= .6]
		\fill[gray!10] (-5,0) -- (7,0) -- (7,5) -- (-5,5) -- cycle; 
		\draw (-5,0) -- (7,0) node[below]{$\bdy \bb R_+^N$};
		\draw [->](-4,-4) -- (-4,5) node[right]{$y_N$}; 
		\fill (0,0)node [below right]{$\overline x$} circle(.05); 
		\draw [thick, gray] (0,-1) circle (2); 
		\fill [gray] (0,-1) node [below left] {$P$} circle (.06)
			       (0,1)  node [above left] {$Q$\; } circle (.06); 
		\fill (12/5,0) node [below]{$x$} circle (.05); 
		\path (3.5,-2.2) node [below right] {$\bdy B(x, \bar\lambda(x))$}; 
		\fill [gray] (1.42, -2.41) circle (.05); 
		\draw [very thick] (12/5, 0) circle (13/5); 
		\draw[->] (7.5,2) arc (135:45:2cm);
	\path(9,3) node {$T$}; 
		\begin{scope}[xshift = 15cm]
			\fill[gray!10] (0,1) circle (2); 
			\draw (-5,0) -- (7,0) node[below]{$\bdy \bb R_+^N$};
			\draw [->](-4,-4) -- (-4,5) node[right]{$y_N$}; 
			\fill (0,0)node [below]{$\overline x$} circle(.05); 
			\draw [thick, gray] (0,1) circle (2)
				  	   (0,-1) circle(2); 
			\fill [gray] (0,-1) node [below left] {$P$} circle (.06)
				       (0,1)  node [above left] {$Q$\; } circle (.06); 
			\fill (12/5,0) node [below]{$x$} circle (.05); 
			\fill [gray] (1.42, -2.41) circle (.05); 
			\draw[very thick] (2, -3.8) -- (-1.6667, 5) node[right] {$\mathcal H(x)= T\left(\bdy B(x, \overline \lambda(x))\right)$}; 
		\end{scope}
	\end{tikzpicture}
	\caption{Visual representation of the properties of $T$}
	\label{figure:Properties-of-T}
\end{figure}
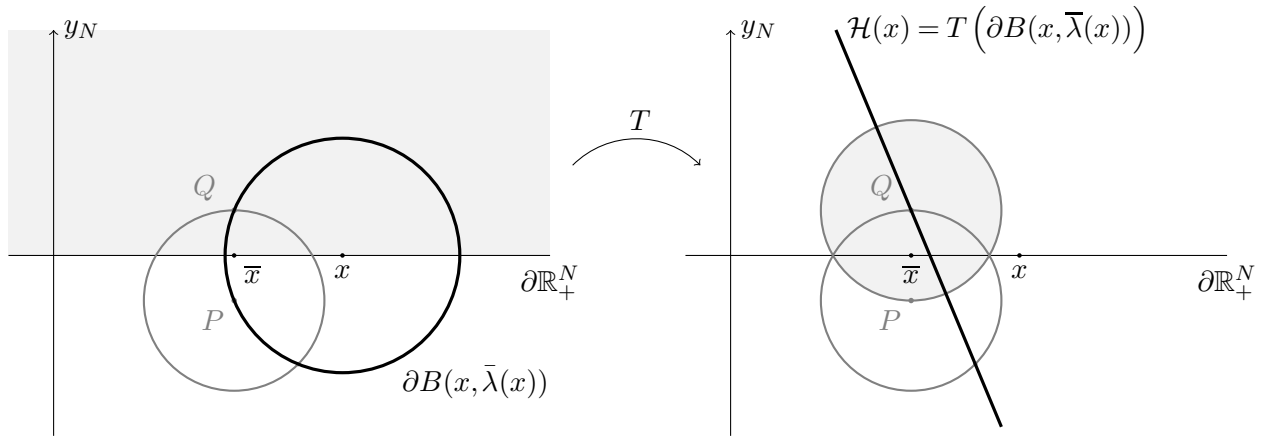
\end{center}
For $z\in B(Q, 2d)$ and $i \in J$, define

\begin{equation}\label{eq:vi-Definition}
	v_i(z) 
		= 
		\left(\frac{2d}{\abs{z - P}}\right)^{N - 2} u_i(Tz). 
\end{equation}
If $x\in \bdy \bb R_+^N$, since $u_i$ is symmetric about $\bdy B(x, \overline \lambda(x))$ in the sense of equation \eqref{eq:Critical-Kelvin-Symmetry}, $v_i$ is symmetric about $\mathcal H(x)$ in $B(Q, 2d)$. Indeed, fix $x\in \bdy \bb R_+^N$ and suppose $z, \tilde z\in B(Q, 2d)$ are symmetric about $\mathcal H(x)$. By performing elementary computations using equations \eqref{eq:Critical-Kelvin-Symmetry} and \eqref{eq:yTilde=yLambda} we obtain 

\begin{equation*}
	v_i(z)
		=
		\left(\frac{2d}{\abs{z - P}}\right)^{N - 2}
		\left(\frac{\bar \lambda(x)}{\abs{Tz - x}}\right)^{N - 2} u_i(T\tilde z)
		= 
		v_i(\tilde z). 
\end{equation*}
Since this holds for all $x\in \bdy \bb R_+^N$, $v_i$ is radially symmetric about $Q$ in $B(Q, 2d)$. \\
Next, observe that the definition of $v_i$ may be extended to $P$ such that the resulting extension is continuous. Indeed, writing $y = Tz$ for $z\in B(Q, 2d)$ and using \eqref{eq:Critical-Kelvin-Symmetry} with $x = \overline x$ we have
	
	\begin{eqnarray*}
		v_i(z)
			& = & 
			\left(\frac{\abs { y - P}}{2d}\right)^{N - 2} u_i(y)
			\\
			& = & 
			\left(\frac{\abs { y - P}}{2d}\right)^{N - 2}
			\left(\frac{\bar \lambda(\overline x)}{\abs{ y - \overline x}}\right)^{N - 2} 
			u_i\left( \overline x 
				+ \frac{\bar \lambda(\overline x)^2(y - \overline x)}{\abs{ y - \overline x}^2}
				\right). 
	\end{eqnarray*}
	Letting $z\to P$ from within $\overline B(Q, 2d)\setminus\{P\}$ (so that $y\to \infty$ from within $\overline{\bb R_+^N}$) in this equality and using $\overline \lambda(\bar x) = d$ gives
		
	\begin{equation}\label{eq:Extend-vi}
		\lim_{z\to P; z\in \overline B(Q, 2d)\setminus \{P\}} v_i(z)
			= 
			\left(\frac 12\right)^{N - 2} u_i(\bar x) 
			>0. 
	\end{equation}
From now on, we identify $v_i$ with its extension to $P$. \\
By an elementary computation, $v_i$ is seen to satisfy

\begin{equation}\label{eq:Transformed-Main-Equations}
	\begin{cases}
		\lap v_i + \prod_{j = 1}^m v_j^{a_{ij}} = 0
		& 
		\text{ in } B(Q, 2d)
		\\
		\frac{\partial v_i}{\partial \nu}(z) + \frac{N-2}{4d} v_i(z) 
			= 
			- c_i\prod_{j = 1}^m v_j(z)^{b_{ij}}
		& 
		\text{ on } \bdy B(Q, 2d) 
		\\
		v_i(z)>0
		& 
		\text{ in }
		\overline B(Q, 2d),
	\end{cases}
	\qquad
	\text{ for all } i\in J, 
\end{equation}
where $\nu$ is the outward unit normal vector on the boundary of $B(Q, 2d)$. Combining the first and third items of \eqref{eq:Transformed-Main-Equations} implies that $v_i$ is non-constant in $B(Q, 2d)$ for all $i\in J$. By a simple maximum-principle argument and since $v_i$ is radial about $Q$ we see that $v_i$ is strictly decreasing about $Q$ in $B(Q, 2d)$. Setting $r = \abs{z - Q}$ we have $v_i(z) = \psi_i(r)$ for some smooth decreasing functions $\psi_i:[0, 2d)\to (0,\infty)$. Using \eqref{eq:Extend-vi} and \eqref{eq:Transformed-Main-Equations}, these functions are seen to satisfy

\begin{equation}\label{eq:ODE-System}
	\begin{cases}
	\psi_i^{\prime\prime}(r) + \frac{N - 1}{r} \psi_i^\prime(r) + \prod_{j = 1}^m \psi_j(r)^{a_{ij}} = 0
		& 
		\text{ for } 0< r <2d
		\\
	\psi_i'(2d) + \frac{N - 2}{4d} \psi_i(2d) = - c_i \prod_{j = 1}^m \psi_j(2d)^{b_{ij}}
		&  
		\\
	\psi_i(2d) = 2^{2-N} u_i(\bar x). 
	\end{cases}
	\qquad
	\text{ for all } i \in J. 
\end{equation}
By the uniqueness of solutions to this system, there are positive constants $\alpha_1, \cdots, \alpha_m$ and $\mu$ satisfying

\begin{equation}\label{eq:alpha-Condition}
	\log \alpha_i 
		= 
		\sum_{j = 1}^m a_{ij} \log \alpha_j - \log\left(\mu^2 N(N -2)\right)
	\qquad
	\text{ for all } i \in J
\end{equation}
such that 

\begin{equation*}
	\psi_i(r)
		= 
		\frac{\alpha_i}{\left(\mu^2 + r^2\right)^{(N-2)/2}}
	\qquad
	\text{ for all } i\in J. 
\end{equation*}
Using this in equation \eqref{eq:vi-Definition} with $z = Ty$, we have

\begin{eqnarray*}
	u_i(y)
		& = &
		\left(\frac{\abs{Ty - P}}{2d}\right)^{N - 2}\frac{\alpha_i}{\left(\mu^2 + \abs{Ty - Q}^2\right)^{(N -2)/2}}
		\\
		& =&
		\frac{\beta_i}{\left(\sigma^2 + \abs{ y - y^0}^2\right)^{(N -2)/2}}
\end{eqnarray*}
for all $y\in \overline{\bb R_+^N}$ and all $i\in J$, where

\begin{equation*}
	\beta_i = \left(\frac{4d^2}{\mu^2 + 4d^2}\right)^{(N-2)/2}\alpha_i, 
	\qquad
	\sigma^2 = \mu^2\left(\frac{4d^2}{\mu^2 + 4d^2}\right)^{2}
	\qquad
	\text{ and } 
	\qquad
	y^0 = \overline x - d\frac{\mu^2 - 4d^2}{\mu^2 + 4d^2}e_N. 
\end{equation*}
By \eqref{eq:alpha-Condition} and the expressions of $\sigma^2$ and $\beta_i$, it is routine to verify that $\sigma^2$ and $\beta_1, \cdots, \beta_m$ satisfy \eqref{eq:Sigma-Beta-Condition}. Moreover, by using both the second item of \eqref{eq:ODE-System} and \eqref{eq:alpha-Condition} one may verify that \eqref{eq:y0-Condition} is satisfied.

\section{Appendix}
%
%
\begin{lemma}
\label{lemma:Boundary-Maximum-Principle}
	Let $R>0$ and suppose $v$ is a solution of 
	
	\begin{equation*}
		\begin{cases}
			- \lap v \geq 0 
			& 
			\text{ in } B_R^+
			\\
			\frac{\partial v}{\partial y_N} <0
			& 
			\text{ on } (\bdy B_R^+ \cap \bdy \bb R_+^N)\setminus \{0\}
			\\
			v>0
			&
			\text{ on } \overline {B_R^+}\setminus \{0\}.
		\end{cases}
	\end{equation*}
	Then $v(y) \geq \min_{\bdy B_R\cap \overline{\bb R_+^N}} v$ for all $y\in \overline{B_R^+}\setminus\{0\}$. 
\end{lemma}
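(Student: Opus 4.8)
The plan is to run the standard combination of the weak/strong minimum principle and Hopf's Lemma, with one twist: since $v$ is only known to be positive and continuous on $\overline{B_R^+}\setminus\{0\}$ and may be singular at the origin (in the application $v$ is a Kelvin transform, which genuinely blows up there), I will first absorb the origin into a barrier. Write $\mu=\min_{\bdy B_R\cap\overline{\bb R_+^N}}v>0$; the goal is to show $v(y)\ge\mu$ on $\overline{B_R^+}\setminus\{0\}$. For $\delta>0$ I would introduce the comparison function
\[
\phi_\delta(y)=v(y)+\delta\left(\abs y^{2-N}-R^{2-N}\right),\qquad y\in\overline{B_R^+}\setminus\{0\}.
\]

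The key observations are elementary: (i) $\abs y^{2-N}$ is harmonic in $\bb R^N\setminus\{0\}$, so $-\lap\phi_\delta\ge 0$ in $B_R^+$; (ii) $\frac{\partial}{\partial y_N}\abs y^{2-N}=(2-N)\abs y^{-N}y_N$ vanishes on $\bdy\bb R_+^N$, hence $\frac{\partial\phi_\delta}{\partial y_N}=\frac{\partial v}{\partial y_N}<0$ on $(\bdy B_R^+\cap\bdy\bb R_+^N)\setminus\{0\}$; (iii) $\phi_\delta\equiv v$ on $\bdy B_R\cap\overline{\bb R_+^N}$ (the term in parentheses vanishes there); and (iv) since $v>0$ and $\delta\abs y^{2-N}\to\infty$, one has $\phi_\delta(y)\to+\infty$ as $y\to0$ within $\overline{B_R^+}\setminus\{0\}$. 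By (iv), $\phi_\delta$ attains its infimum over $\overline{B_R^+}\setminus\{0\}$ at some point $y^\ast$ that stays away from the origin (a compactness argument on $\overline{B_R^+}\setminus B_\epsilon$).

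Now I would carry out the usual case analysis on $y^\ast$. If $y^\ast\in B_R^+$, the strong minimum principle for superharmonic functions on the connected set $B_R^+$ would force $\phi_\delta$ to be constant, contradicting (iv). If $y^\ast\in(\bdy B_R^+\cap\bdy\bb R_+^N)\setminus\{0\}$, then — the flat boundary satisfying an interior ball condition, and $\phi_\delta$ being non-constant hence strictly above its minimum throughout $B_R^+$ — Hopf's Lemma gives $\frac{\partial\phi_\delta}{\partial\nu}(y^\ast)<0$ for the outer normal $\nu=-e_N$, i.e.\ $\frac{\partial\phi_\delta}{\partial y_N}(y^\ast)>0$, contradicting (ii). (No separate treatment of corners is needed, since the edge $\bdy B_R\cap\bdy\bb R_+^N$ is part of the closed cap $\bdy B_R\cap\overline{\bb R_+^N}$.) Hence $y^\ast\in\bdy B_R\cap\overline{\bb R_+^N}$, so by (iii)
\[
\min_{\overline{B_R^+}\setminus\{0\}}\phi_\delta=\phi_\delta(y^\ast)=v(y^\ast)\ge\mu .
\]
Consequently $v(y)\ge\mu-\delta\left(\abs y^{2-N}-R^{2-N}\right)$ for every $y\in\overline{B_R^+}\setminus\{0\}$, and letting $\delta\to0^+$ with $y$ fixed yields $v(y)\ge\mu$, which is the assertion.

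\textbf{Main obstacle.} The only genuine difficulty is the possible singularity of $v$ at $0$; everything else is routine. The device that resolves it is the remark that the Newtonian barrier $\abs y^{2-N}$ is \emph{simultaneously} harmonic in $B_R^+$ and tangent to the hyperplane $\bdy\bb R_+^N$ (its $y_N$-derivative vanishes there), so it can be added with a small weight $\delta$ without disturbing either the differential inequality or the sign of the Neumann data, while making $\phi_\delta$ blow up at the origin so that the minimum is forced onto the spherical part of the boundary.
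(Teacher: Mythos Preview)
Your proof is correct and follows essentially the same route as the paper: both exploit that the Newtonian potential $\abs{y}^{2-N}$ is harmonic with vanishing $y_N$-derivative on $\bdy\bb R_+^N$, use the maximum principle together with the strict Neumann sign to rule out a minimum on the flat boundary, and then pass to a limit. The only cosmetic difference is that the paper excises $B_\epsilon$ and compares $v$ with the harmonic interpolant $\phi(y)=m_R\,(\epsilon^{2-N}-\abs y^{2-N})/(\epsilon^{2-N}-R^{2-N})$ before letting $\epsilon\to0$, whereas you add $\delta\abs y^{2-N}$ to $v$ and let $\delta\to0$; these are two equivalent packagings of the same barrier idea.
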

\begin{proof}
	Set $m_R = \min_{\bdy B_R\cap \overline{\bb R_+^N}} v$ and fix $0<\epsilon<R$. Define 
	
	\begin{equation*}
		\phi(y)
			= 
			m_R \frac{ \epsilon^{2-N} - \abs y^{2-N}}{\epsilon^{2-N} - R^{2-N}}
		\qquad
		\text{ for } \epsilon\leq \abs y\leq R. 
	\end{equation*}
	One may easily verify that $v - \phi$ satisfies
	
	\begin{equation}\label{eq:Boundary-Maximum-Principle-cNegative}
		\begin{cases}
			- \lap(v - \phi) \geq 0
			& 
			\text{ in } B_R^+\setminus B_\epsilon
			\\
			\frac{\partial }{\partial y_N} (v - \phi) <0
			&
			\text{ on } \bdy(B_R^+ \setminus B_\epsilon)\cap \bdy \bb R_+^N 
			\\
			v - \phi \geq 0 
			&
			\text{ on } (\bdy B_R \cup \bdy B_\epsilon ) \cap \overline{\bb R_+^N}. 
		\end{cases}
	\end{equation}
	According to the maximum principle and the third item of \eqref{eq:Boundary-Maximum-Principle-cNegative}, if $v - \phi$ is negative at any point of $\overline{B_R^+}\setminus B_\epsilon$, then there is $x_0\in \bdy \bb R_+^N \intersect \{ \epsilon <\abs y < R\}$ such that 
	
	\begin{equation*}
		\min_{\overline{B_R^+}\setminus B_\epsilon} (v - \phi)
			= 
			(v - \phi)(x_0)
			<
			0. 
	\end{equation*}
	Moreover, since $x_0\in \bdy \bb R_+^N$ is a minimizer of $v - \phi$, we have $\frac{\partial}{\partial y_N}(v - \phi) (x_0)\geq 0$. This violates the second item of \eqref{eq:Boundary-Maximum-Principle-cNegative}. We conclude that $v\geq \phi$ in $\overline{B_R^+}\setminus B_\epsilon$. Finally, if $y\in \overline{B_R^+}\setminus\{0\}$, and if $0<\epsilon<\abs y/2$ we have
	
	\begin{equation*}
		v(y)
			\geq
			m_R \frac{\epsilon^{2-N} - \abs y^{2-N}}{\epsilon^{2-N} - R^{2-N}}. 
	\end{equation*}
	Letting $\epsilon \to 0$ in this inequality gives the desired result. 
\end{proof}
The proofs of the following two lemmas can be found in \cite{LiZhu1995}, \cite{ChipotShafrirFila1996} or \cite{LiZhang2003}. 
\begin{lemma}\label{lemma:t-Dependence-Only}
	Let $f\in C^1(\bb R_+^N)$, $N\geq 2$ and $b>0$. If $f$ satisfies 
	
	\begin{equation*}
		f(y)
			\geq
			\left(\frac{\lambda}{\abs{y - x}}\right)^b
			f\left( x + \frac{\lambda^2(y - x)}{\abs{ y - x}^2}\right)
		\qquad
		\text{ for all } 
		y\in \bb R_+^N, \;
		x\in \bdy \bb R_+^N \text{ and } \lambda >0, 
	\end{equation*}
	then $f(y) = f(y_N e_N)$ for all $y\in {\bb R_+^N}$, where $e_N = (0, \cdots, 0, 1)$.  
\end{lemma}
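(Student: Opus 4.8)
The plan is to show that the function $f$ cannot genuinely depend on the tangential variables $y' = (y_1,\dots,y_{N-1})$ by exploiting the inversion inequality as $\lambda \to \infty$. The key observation is that for each fixed $x \in \bdy \bb R_+^N$, multiplying the hypothesis by $\abs{y-x}^b$ and rearranging gives
\begin{equation*}
	\abs{y - x}^b f(y)
		\geq
		\lambda^b
		f\!\left( x + \frac{\lambda^2(y - x)}{\abs{y - x}^2}\right)
	\qquad \text{for all } y \in \bb R_+^N,\ \lambda > 0.
\end{equation*}
Fix $y \in \bb R_+^N$ and $x \in \bdy \bb R_+^N$. As $\lambda \to \infty$, the image point $x + \lambda^2(y-x)/\abs{y-x}^2$ stays on the ray from $x$ through $y$ but marches off to infinity, so this inequality forces control on the growth of $f$ along that ray. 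More precisely, writing $z_\lambda = x + \lambda^2(y-x)/\abs{y-x}^2$ and noting $\abs{z_\lambda - x} = \lambda^2/\abs{y-x}$, one can re-parametrize: for any point $z$ on the ray emanating from $x$ through $y$ with $\abs{z-x}$ large, taking $\lambda^2 = \abs{y-x}\,\abs{z-x}$ yields $f(z) \leq (\abs{z-x}/\abs{y-x})^{-b} \abs{y-x}^{?}\cdots$ — the upshot being a bound of the form $f(z) \leq C(x,y)\,\abs{z-x}^{-b}$ for $z$ far out along that ray. Thus $f$ decays at least like $\abs{z}^{-b}$ in every direction.

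Next I would use this decay together with the hypothesis run in the opposite direction to pin down equality. The standard argument (as in \cite{LiZhu1995}, \cite{ChipotShafrirFila1996}) is: suppose toward a contradiction that $f(p) \neq f(q)$ for two points $p, q \in \bb R_+^N$ having the same last coordinate $p_N = q_N$. One selects $x \in \bdy \bb R_+^N$ and $\lambda > 0$ so that the inversion about $\bdy B_\lambda(x)$ maps a neighborhood of $p$ to a neighborhood of $q$ (this is possible precisely because $p$ and $q$ lie at equal height, so there is a sphere centered on $\bdy \bb R_+^N$ passing through both, and the inversion in that sphere — for the right choice of $x$ and $\lambda$ — swaps them). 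Applying the hypothesis at this $(x,\lambda)$ gives $f \geq f_{x,\lambda}$ near the relevant region, while applying it again — or using the decay estimate to rule out the strict inequality persisting — forces $f(p) = f(q)$. The cleanest route is: the inequality $f \geq f_{x,\lambda}$ must in fact be an equality wherever both sides are defined, because if $w = f - f_{x,\lambda}$ were positive somewhere and $w \geq 0$ everywhere in $\Sigma_{x,\lambda}$, then the decay rate $\abs y^{-b}$ of $f$ contradicts the lower bound $w(y) \geq c\abs{y}^{-b}$ that a strict inequality at one point would propagate (via the superharmonic-type comparison, if $f$ were a solution — but here $f$ is merely $C^1$, so one argues directly from the inversion identity for $f_{x,\lambda}$ evaluated at a point tending to $x$).

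The main obstacle I anticipate is the last point: $f$ is only assumed to be $C^1$, with no PDE, so one cannot invoke the maximum principle to upgrade "$\geq 0$ with a positive value somewhere" to a quantitative lower bound. Instead the argument must be purely geometric/algebraic, juggling the inversion identity for multiple choices of $(x,\lambda)$ simultaneously: one shows that the set of $(x,\lambda)$ for which $f \equiv f_{x,\lambda}$ is nonempty (large $\lambda$, using the decay), is closed, and is open in the appropriate parameter space, hence is everything — and then $f \equiv f_{x,\lambda}$ for all $x$ and all sufficiently large $\lambda$ forces radial symmetry of $f$ about every point of $\bdy \bb R_+^N$, which is only possible if $f$ depends on $y_N$ alone. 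Since a complete and careful treatment appears in \cite{LiZhu1995}, \cite{ChipotShafrirFila1996} and \cite{LiZhang2003}, I would cite those references for the details rather than reproduce the full moving-sphere-on-the-boundary argument here.
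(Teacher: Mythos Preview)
The paper itself gives no proof of this lemma: it is stated in the appendix with the remark that proofs can be found in \cite{LiZhu1995}, \cite{ChipotShafrirFila1996} or \cite{LiZhang2003}. Since your proposal ultimately defers to exactly these references, you are in agreement with the paper at the level of what is actually written.

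That said, the sketch you offer before citing is not how the argument in those references goes, and parts of it would not work. The proof in \cite{LiZhu1995,LiZhang2003} is a short, purely first-order calculus argument: for fixed $y\in\bb R_+^N$ and $x\in\bdy\bb R_+^N$, the map $\lambda\mapsto (\lambda/\abs{y-x})^b f\bigl(x+\lambda^2(y-x)/\abs{y-x}^2\bigr)$ is bounded above by $f(y)$ and equals $f(y)$ at $\lambda=\abs{y-x}$, so its $\lambda$-derivative there is nonnegative. Computing this derivative gives
\[
b\,f(y) + 2\,\nabla f(y)\cdot(y-x)\ \geq\ 0
\qquad\text{for every }x\in\bdy\bb R_+^N.
\]
Writing $y=(y',y_N)$, $x=(x',0)$ and sending $x'$ to infinity in the direction of $\nabla_{y'}f(y)$ forces $\nabla_{y'}f(y)=0$; hence $f$ depends only on $y_N$. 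There is no decay estimate, no open--closed continuation, and no need to upgrade the inequality to an identity. Your proposed route---showing decay, then selecting a sphere swapping two equal-height points, then running an open/closed argument on the set of $(x,\lambda)$ with $f\equiv f_{x,\lambda}$---relies on maximum-principle-type propagation that is unavailable for a bare $C^1$ function, as you yourself note; that obstacle is real and is precisely why the cited proofs take the differentiation route instead.
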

\begin{lemma}
\label{lemma:Critical-Kelvin-Symmetry}
	Let $f\in C^1(\bb R^N)$, $N\geq 1$ and $b >0$. Suppose that for every $x\in \bb R^N$, there exists $\lambda(x)>0$ such that 
	
	\begin{equation*}
		\left(\frac{\lambda(x)}{\abs{ y - x}}\right)^b 
		f\left( x + \frac{\lambda(x)^2(y - x)}{\abs{y - x}^2}\right)
		= 
		f(y)
		\qquad
		\text{ for all } y\in \bb R^N\setminus \{x\}. 
	\end{equation*}
	Then there exists $a\geq0$, $d>0$ and $\bar x\in \bb R^N$ such that 
	
	\begin{equation*}
		f(x) 
		= 
		\pm \left(\frac{a}{d + \abs{\bar x - x}^2}\right)^{b/2}. 
	\end{equation*}
\end{lemma}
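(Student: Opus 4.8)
The plan is to derive from the hypothesis an explicit identity for the gradient of $f^{-2/b}$ by analysing the behaviour of $f$ near infinity. Abbreviate $x^\ast = x + \lambda(x)^2(y - x)/\abs{y - x}^2$ for the inversion of $y$ in $\bdy B(x, \lambda(x))$.

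\textbf{Step 1 (the limit at infinity).} First I would fix $x \in \bb R^N$ and let $\abs y \to \infty$ in the identity $f(y) = (\lambda(x)/\abs{y - x})^b f(x^\ast)$. Using $x^\ast \to x$, continuity of $f$ and $\abs y/\abs{y - x} \to 1$, this shows that $A := \lim_{\abs y \to \infty}\abs y^b f(y)$ exists and
\[
A = \lambda(x)^b f(x) \qquad \text{for all } x \in \bb R^N .
\]
If $A = 0$ then $f \equiv 0$ (since $\lambda(x) > 0$), which is the conclusion with $a = 0$. If $A \neq 0$ then $f$ is nowhere zero and of one sign; since the equation is $\bb R$-linear in $f$, after replacing $f$ by $\pm f/A$ I may assume $f > 0$ and $A = 1$, so that $\lambda(x) = f(x)^{-1/b}$; in particular $\lambda \in C^1$.

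\textbf{Step 2 (a gradient identity).} Next I would set $g = f^{-1/b} \in C^1(\bb R^N)$, $g > 0$, so that the hypothesis becomes $g(x)\,g(y) = \abs{y - x}\, g(x^\ast)$ with $x^\ast = x + g(x)^2(y - x)/\abs{y - x}^2$. Fixing $x$ and letting $\abs y \to \infty$, one has $x^\ast \to x$ with $\abs{x^\ast - x} = g(x)^2/\abs{y - x}$, so a first-order Taylor expansion of $g$ at $x$, together with $\abs{y - x} = \abs y - x\cdot\hat y + O(\abs y^{-1})$ and $(y - x)/\abs{y - x} = \hat y + O(\abs y^{-1})$ (with $\hat y = y/\abs y$), should give
\[
g(y) = \abs y + \bigl(g(x)\nabla g(x) - x\bigr)\cdot\hat y + o(1) \qquad \text{as } \abs y \to \infty .
\]
Since the left side does not involve $x$, subtracting the expansions at two points and sending $\abs y \to \infty$ along a fixed direction forces $g(x)\nabla g(x) - x$ to be independent of $x$, say equal to $-\bar x$. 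Then $\nabla(g^2) = \nabla(\abs{x - \bar x}^2)$, hence $g(x)^2 = \abs{x - \bar x}^2 + d$ for some constant $d$, and $d = g(\bar x)^2 > 0$ since $g > 0$. Undoing the normalizations of Step 1 gives $f(x) = \pm\bigl(a/(d + \abs{x - \bar x}^2)\bigr)^{b/2}$ with $a = A^{2/b} > 0$, which is the claim.

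\textbf{The hard part.} Everything here is elementary except the justification of the displayed expansion in Step 2: the point to be careful about is that the Taylor remainder of $g$ at $x$ is $o(\abs{x^\ast - x}) = o(\abs y^{-1})$, which is then multiplied by $\abs{y - x}$ and therefore remains $o(1)$, while $\nabla g(x)$ is held fixed as $(y - x)/\abs{y - x} \to \hat y$. Only $g \in C^1$ is used, which is all the regularity available since $f$ is merely $C^1$; the remaining work is just the bookkeeping of the sign and scale normalizations.
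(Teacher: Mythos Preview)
Your argument is correct. The paper does not supply its own proof of this lemma; it simply cites \cite{LiZhu1995}, \cite{ChipotShafrirFila1996} and \cite{LiZhang2003}, and the approach you give is essentially the one found there (compare, e.g., Lemma~11.1 of \cite{LiZhang2003}): extract the constant $A=\lim_{|y|\to\infty}|y|^b f(y)=\lambda(x)^b f(x)$, normalise, pass to $g=f^{-1/b}$, and read off from the large-$|y|$ expansion that $g\nabla g - x$ is constant. The only cosmetic slip is in the very last line: since $A$ may be negative, the correct constant is $a=|A|^{2/b}$ rather than $a=A^{2/b}$.
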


\end{document}